\newcommand \Sd{{\mathbb{S}^2}}
\newcommand{\sm}{\setminus}
\newcommand{\sq}{\subseteq}
\newcommand{\eps}{\epsilon}
\newcommand \Om{\Omega}
\newcommand \om{\omega}
\newcommand \cp{\operatorname{cap}}
\newcommand \N{\mathbb{N}}
\title{Spherical caps do not always maximize Neumann eigenvalues on the sphere}
\author[D. Bucur]
{Dorin Bucur}
\address[Dorin Bucur]{Univ. Savoie Mont Blanc, CNRS, LAMA \\
73000 Chamb\'ery, France}
\email[D. Bucur]{dorin.bucur@univ-savoie.fr}
\author[R. Laugesen]{Richard S. Laugesen}
\address[Richard Laugesen]{Department of Mathematics, University of Illinois, Urbana,
IL 61801, U.S.A.}
\email{Laugesen@illinois.edu}
\author[E. Martinet]{Eloi Martinet}
\address[Eloi Martinet]{Institut für Mathematik, JMU Würzburg \\ 97074 Würzburg, Germany}
\email{eloi.martinet@uni-wuerzburg.de}
\author[M. Nahon]{Micka\"{e}l Nahon}
\address[Micka\"{e}l Nahon]{ Univ. Grenoble Alpes, CNRS, Grenoble INP, LJK, 38000 Grenoble, France.}
\email{mickael.nahon@univ-grenoble-alpes.fr}
\keywords{Vibrating free membrane, isoperimetric inequality, Laplacian eigenfunction, Laplace--Beltrami}
\subjclass[2020]{\text{Primary 35P15. Secondary 58J50}}
\newcommand{\e}{\varepsilon}
\newcommand{\R}{{\mathbb R}}
\newtheorem{theorem}{Theorem}
\newtheorem{proposition}[theorem]{Proposition}
\newtheorem{lemma}[theorem]{Lemma}
\newtheorem{remark}[theorem]{Remark}
\begin{document}
\begin{abstract}
We prove the existence of an open set $\Om\subset\mathbb{S}^2$ for which the first positive eigenvalue of the Laplacian with Neumann boundary condition exceeds that of the geodesic disk having the same area. This example  holds for large areas  and  contrasts with results by Bandle and later authors proving maximality of the disk under additional topological or geometric conditions, thereby revealing such conditions to be necessary.
\end{abstract}

\maketitle

\section{Introduction}
On a smooth subdomain $\Om$ of the standard unit sphere $\Sd$ in $\R^3$, the Laplace--Beltrami operator with Neumann boundary conditions has discrete spectrum consisting of eigenvalues, denoted
$$0= \mu_0(\Om) <  \mu_1(\Om) \le \dots\le \mu_k(\Om) \to +\infty$$
and counted with respect to multiplicity. The eigenfunction corresponding to $\mu_0=0$ is constant. The first and second positive eigenvalues are  
\[
\mu_1(\Om) = \min_{ u \in H^1(\Om) \sm \{0\}, \int_\Om u =0 } \frac{\int_\Om |\nabla u|^2 }{\int_\Om u^2 } , \qquad \mu_2(\Om) = \min_{S\in{\mathcal S}_{2}} \max_{u \in S\sm \{0\}} \frac{\int_\Om |\nabla u|^2 }{\int_\Om u^2 },
\]
where integrals are with respect to surface area measure and ${\mathcal S}_2$ is the family of $2$-dimensional subspaces of 
$\{u\in H^1(\Om):\int_{\Om}u=0\}$. The eigenfunctions $(u_k)_{k\geq 0}$ satisfy 
\[
\begin{cases}
-\Delta_{\Sd} u_k = \mu_k(\Om) u_k &\mbox { in } \Om,\\
\partial_\nu u_k = 0  &\mbox { on } \partial \Om,
\end{cases} 
\]
where $\partial_\nu$ is the inward normal derivative. 

The question of maximizing $\mu_1(\Om)$ under an area constraint $|\Om|=m$ for given ``mass'' $m \in (0,4\pi)$ has been studied intensively in the last fifty years, and yet intriguing open problems remain unsolved. 

To briefly summarize this history, Ashbaugh and Benguria \cite{AB95} proved that if $m \le 2\pi$ and $\Om$ is contained in a hemisphere then
\begin{equation}\label{blmn01}
\mu_1(\Om) \le \mu_1(B_{\theta_m})
\end{equation}
where $B_{\theta_m}$ is a spherical cap of area $m$ and $\theta_m$ is the corresponding aperture angle or geodesic radius, measured from the center of the cap. Whether this hemisphere inclusion condition is merely technical or instead indicates a genuine mathematical obstruction has remained an open question, stated for example in \cite[Conjecture 1.3]{LL22}. A step toward understanding this point was taken by Bucur, Martinet and Nahon \cite{BMN22}, who relaxed the hemisphere condition to inclusion in the complement of a spherical cap of mass $m$. The smaller $m$ is, the larger the region in which $\Om$ may be placed. However, in the same paper, the authors numerically exhibited a density or weight whose first eigenvalue is larger than that of the spherical cap having the same mass, thus leading to the conclusion that even if inequality \eqref{blmn01} holds true without the inclusion restriction, the way to prove it cannot follow existing mass transplantation arguments. Further, for domains having area somewhat greater than $2\pi$, Martinet \cite{M23} found an example whose first eigenvalue, computed numerically, exceeds that of the spherical cap having the same mass. This counterexample domain is not simply connected and indeed has rather complicated topology. 

Results by Bandle \cite{Ba72} preceded those of Ashbaugh and Benguria, although with different hypotheses. She proved that inequality \eqref{blmn01} holds provided one restricts to simply connected domains $\Om$ having at most half the area of the sphere, $|\Om|\le 0.50 |\Sd|$. This approach imposes no restriction on the parts of the sphere the set is permitted to occupy. Langford and Laugesen \cite{LL22} recently improved from 50\% to 94\% of the sphere, proving \eqref{blmn01} for simply connected domains with area $|\Om|\le 0.94 |\Sd|$. 

The main purpose of this paper is to prove analytically that inequality \eqref{blmn01} can fail for multiply connected domains, by constructing a rigorous counterexample whose topology is relatively simple. Precisely, for a  large  mass $m$, e.g. above  $0.81|\Sd|$,  we prove that there exists a non-simply connected set of mass $m$ whose first eigenvalue is larger than that of the corresponding spherical cap. 

Write $\Theta$ for the unique aperture defined by the relation $\mu_1(B_{\Theta})\sin^2 \Theta =1$; it is shown in \cite[Propositions 3.1 and 4.2]{LL22} that $\mu_1(B_\theta)\sin^2 \theta >1$ on $(0,\Theta)$ and $\mu_1(B_\theta)\sin^2 \theta <1$ on $(\Theta,\pi)$ and that the value of $\Theta$ is approximately $0.7\pi$:
\[
0.70\pi<\Theta<0.71\pi .
\]

The counterexample to \eqref{blmn01} is provided by:
\begin{theorem}\label{main_th}
There exists $\delta>0$ such that for each $\theta\in (\Theta-\delta,\pi)$, there exists an open set $\Om_\theta \sq \Sd$ with $|\Om_\theta|=|B_\theta|$ and 
$$\mu_1(\Om_\theta) >\mu_1 (B_{\theta}).$$
\end{theorem}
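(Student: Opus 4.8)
The plan is to realize the counterexample among domains of the form ``the round sphere with several small, symmetrically placed geodesic balls deleted,'' and to compare its first Neumann eigenvalue with that of the cap by a singular perturbation analysis centred at $\Sd$.

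\emph{The competitor.} Fix a finite subgroup $G\subset O(3)$ acting irreducibly on $\R^3$; the rotation group of a regular tetrahedron is the economical choice, whose vertex set is a single $G$-orbit $\{v_1,\dots,v_4\}\subset\Sd$. For $\alpha\ge 0$ small set
\[
\Om_\alpha \;:=\; \Sd\sm\bigcup_{j} \overline{B_\alpha(v_j)}.
\]
For $\alpha>0$ this set is connected and not simply connected (topologically $\Sd$ with four open disks removed), and $|\Om_\alpha|=4\pi-4|B_\alpha|$ decreases continuously from $4\pi$ as $\alpha$ grows. Hence for $\theta$ close enough to $\pi$ there is a unique small $\alpha=\alpha(\theta)$ with $|\Om_{\alpha(\theta)}|=|B_\theta|$; put $\Om_\theta:=\Om_{\alpha(\theta)}$. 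It remains to prove $\mu_1(\Om_\theta)>\mu_1(B_\theta)$.

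\emph{Perturbation at the round sphere.} Recall $\mu_1(\Sd)=2$ with three-dimensional eigenspace spanned by the coordinate functions $x_1,x_2,x_3$, where $\int_{\Sd}x_k^2=\tfrac{4\pi}{3}$. Deleting a finite family of small Neumann balls is a classical singular perturbation (matched asymptotics of capacitary/dipole type, in the spirit of Ozawa): the triple eigenvalue $2$ splits as
\[
\mu_k(\Om_\alpha)=2+\alpha^2\lambda_k+o(\alpha^2),\qquad k=1,2,3,
\]
where $\lambda_1\le\lambda_2\le\lambda_3$ are the eigenvalues of the symmetric matrix
\[
\Sigma \;=\; \tfrac{3}{2}\sum_{j}\bigl(2\,v_jv_j^{\mathsf T}-I\bigr)\ \in\ \R^{3\times 3}.
\]
In $\Sigma$ the term $2v_jv_j^{\mathsf T}$ records the $L^2$-mass lost at the $j$-th hole (weighted by $\mu_1(\Sd)=2$), while $-I$ comes from $\nabla x_k(v)\!\cdot\!\nabla x_\ell(v)=\delta_{k\ell}-v_kv_\ell$ on $\Sd$ together with the dipole correction enforcing $\partial_\nu=0$ on each $\partial B_\alpha(v_j)$. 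Irreducibility of $G$ forces $\sum_j v_jv_j^{\mathsf T}=\tfrac{4}{3}I$, hence $\Sigma=-2I$ and the three perturbed eigenvalues coincide to leading order:
\[
\mu_1(\Om_\theta)=2-\tfrac{1}{2\pi}\,\bigl|\Sd\sm\Om_\theta\bigr|+o\!\left(|\Sd\sm\Om_\theta|\right)\qquad(\theta\to\pi),
\]
using $|\Sd\sm\Om_\theta|=4|B_\alpha|\sim 4\pi\alpha^2$. The same analysis applied to the cap, which is $\Sd$ with a \emph{single} ball removed, has a rank-one mass matrix: $\Sigma_{\mathrm{cap}}=\tfrac32(2vv^{\mathsf T}-I)$ has eigenvalues $\tfrac32$ (once) and $-\tfrac32$ (twice), so $\mu_1(B_\theta)=\mu_2(B_\theta)$ is the doubly degenerate lower value and
\[
\mu_1(B_\theta)=2-\tfrac{3}{2\pi}\,\bigl|\Sd\sm B_\theta\bigr|+o\!\left(|\Sd\sm B_\theta|\right)\qquad(\theta\to\pi).
\]
Since the two complements have the same measure $A:=4\pi-|B_\theta|$, subtraction gives
\[
\mu_1(\Om_\theta)-\mu_1(B_\theta)=\frac{A}{\pi}+o(A)>0
\]
for $\theta$ close enough to $\pi$: morally, spreading the deleted mass over a balanced configuration lowers the whole eigenvalue cluster only by its \emph{average} $-A/2\pi$, whereas one cap drives a two-dimensional sub-eigenspace down three times as far — which is exactly why the cap loses.

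\emph{From near $\pi$ to $(\Theta-\delta,\pi)$, and the main obstacle.} The step above covers an interval $(\pi-\eps_0,\pi)$. The hard part — and, I expect, the technical heart of the paper — is to reach all $\theta\in(\Theta-\delta,\pi)$: near $\theta=\Theta$ the deleted caps have radius of order $\pi/6$, the perturbative expansion no longer controls $\mu_1(\Om_\theta)$, and one needs a genuine \emph{lower} bound for the first Neumann eigenvalue of the four-holed sphere with non-infinitesimal holes. Two complementary routes present themselves. First, make the expansion effective: because the leading gap $A/\pi$ is far from small on this range, only crude explicit remainder estimates should be needed to keep $\mu_1(\Om_\theta)-\mu_1(B_\theta)>0$ for all relevant $\alpha$. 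Second, a continuity/continuation argument: $g(\theta):=\mu_1(\Om_\theta)-\mu_1(B_\theta)$ is continuous on $(0,\pi)$ and positive near $\pi$, so it suffices to exclude a zero on $(\Theta-\delta,\pi)$; here the $G$-symmetry helps, since it forces $\int_{\Om_\theta}x_k=0$ (the deleted set is $G$-invariant and $G$ acts irreducibly on $\R^3$), which both confines the bottom eigenfunctions to the three-dimensional representation and opens the way to a matching lower estimate, and combining this with the recalled inequality $\mu_1(B_\theta)<1/\sin^2\theta$ for $\theta>\Theta$ should close the range $(\Theta,\pi)$, the short remaining interval $(\Theta-\delta,\Theta]$ being handled by continuity of $g$ at $\Theta$.
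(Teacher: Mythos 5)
Your perturbative calculation at $\Sd$ is correct and rather clean: with four geodesic disks of radius $\alpha$ at tetrahedral vertices, the matrix entering the paper's Proposition~\ref{Prop_limholes} (taking $\Om=\Sd$, $u_i=\sqrt{3/4\pi}\,x_i$, and each $\om_n$ a unit disk with virtual mass $M_{\om_n}=2\pi I_2$) does reduce to $\frac{3}{2}\sum_j(2v_jv_j^{\mathsf T}-I)=-2I$, by the spherical $2$-design identity $\sum_j v_jv_j^{\mathsf T}=\frac{4}{3}I$. Comparing with the single hole of equal total area $A$ (which, per Section~\ref{sec_asymptotic} of the paper, has perturbation eigenvalues $\{-\frac{3A}{2\pi},-\frac{3A}{2\pi},+\frac{3A}{2\pi}\}$), you correctly obtain $\mu_1(\Om_\theta)-\mu_1(B_\theta)=\frac{A}{\pi}+o(A)>0$ as $\theta\to\pi$. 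That is a valid, self-contained counterexample for $\theta$ near $\pi$, and arguably a slicker one there than in the paper.

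But the theorem asserts the inequality for \emph{every} $\theta\in(\Theta-\delta,\pi)$, and your construction cannot reach that range: the asymptotic expansion at $\Sd$ is only valid as $\alpha\to 0$, i.e.\ as $\theta\to\pi$, and you candidly acknowledge you have no control once the deleted caps have non-infinitesimal radius (already of order $\pi/6$ near $\theta=\Theta$). Neither of your two suggested remedies is carried out, and neither plausibly closes the gap: ``effective remainder bounds'' in Ozawa-type expansions become genuinely hard for holes of order-one radius, and the ``continuity'' route requires a lower bound on $\mu_1$ of the four-holed sphere that you do not produce --- the $G$-invariance argument only says the candidate trial functions lie in the vector representation, not that the Rayleigh quotient is large. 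So there is a real, unresolved gap in your proposal for all $\theta$ bounded away from $\pi$.

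The paper's construction is structurally different precisely to avoid this. Instead of perturbing $\Sd$ by removing a configuration of holes and letting $\theta\to\pi$, they take the target cap $B_\theta$ itself (for each fixed $\theta>\Theta$) and remove four \emph{tiny} holes placed at the latitude $t_{\max}(\theta)$ of the interior hot spot of the radial eigenfunction $g$, where $g'(t_{\max})=0$. The singular parameter $\eps$ (hole size) then tends to $0$ for each fixed $\theta$, so the expansion is valid uniformly on compact subsets of $(\Theta,\pi)$. Two further features are essential and absent from your outline: the holes are ellipses with tunable eccentricity $\lambda$, and the positivity of the leading coefficient rests on the inequality $\mu_1(B_\theta)\sin^2 t_{\max}>1$ valid for $t_{\max}<\Theta$ (Proposition~\ref{basicsphere} and the definition of $\Theta$). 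Finally, the short interval $(\Theta-\delta,\Theta]$ requires an extra Taylor expansion (the paper's Proposition~\ref{pr:nearTheta}) tuning both $\lambda\to 1^+$ and $t\to\Theta^-$ in a coupled way, exploiting the degeneracy $g'(\Theta)=g''(\Theta)=0$. None of this is captured by the tetrahedral deletion of $\Sd$, so the key idea of the proof --- attacking the hot spot --- is what your proposal is missing.
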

 By no means do we claim optimality of the lower bound $\Theta-\delta$. On the contrary, we provide numerical evidence supporting the idea that such a counterexample can exist for lower areas, although still above $2\pi$.  Apertures in the range $(\Theta,\pi)$ correspond to the area or mass $m$ lying between about $0.81 |\Sd|$ and $|\Sd|$. 

The key point underlying the theorem is that when $\theta>\Theta$, the geodesic ball $B_\theta$ possesses hot spots, meaning interior local extrema of the first eigenfunctions. The strategy for proving the theorem is to start with such a spherical cap of aperture $\theta$ and remove four small ellipses of large eccentricity that are symmetrically placed at the latitude of the hot spots (see Figure \ref{fig:example_domain}). The theorem then follows from an asymptotic analysis of the first  (simple)  eigenvalue of this modified set, building on work of Ozawa \cite{Oz83},  however with the extra difficulty of dealing with a geometry where the first eigenvalue is multiple.

\begin{figure}
  \centering
  \label{fig:example_domain}
  \includegraphics[width=0.3\textwidth]{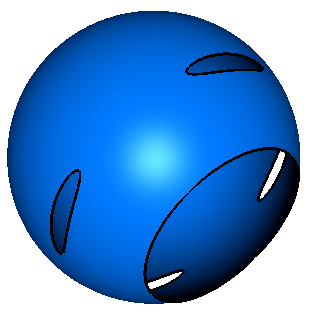}
  \includegraphics[width=0.3\textwidth]{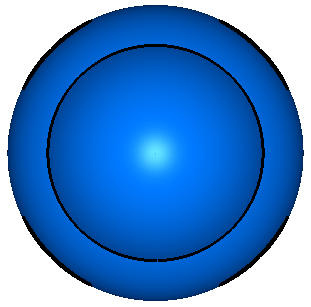}
  \includegraphics[width=0.3\textwidth]{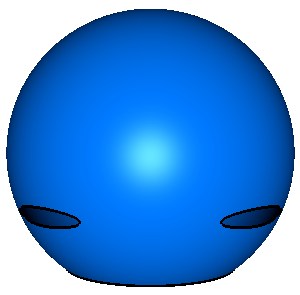}
  \caption{Different views of one punctured domain used in the proof of Theorem \ref{main_th}.}
\end{figure}

A different kind of counterexample is pursued in Section \ref{sec_num}, where we provide a non-rigorous argument for ``helmet'' or ``basket'' counterexamples, as depicted in Figure \ref{fig:sphere_strips}. We support our construction also with numerical evidence. The approach suggests that the lower bound $\Theta-\delta$ in Theorem \ref{main_th} might be significantly lowered. 

\subsection*{Open problems} Some significant problems remain to be resolved. 
\begin{itemize}
\item When $\theta\leq\frac{1}{2}\pi$, is it true that the first positive eigenvalue $\mu_1$ is maximal on the spherical cap $B_{\theta}$ among domains of measure $|B_\theta|$, with no further restriction on the domain?
\item Among simply connected domains, is there a counterexample when $|B_\theta|>0.94|\Sd|$?
\end{itemize} 

\subsection*{Overview} The paper is structured as follows. In the next section we recall known facts about the first and second eigenfunctions of spherical caps. In Section \ref{sec_conv} we give a precise asymptotic for Neumann eigenvalues in the presence of several holes, following in the footsteps of Ozawa \cite{Oz83}. Section \ref{blmn02} estimates the second term of the asymptotic development of $\mu_1$ on the perturbed spherical cap, and hence proves the main theorem. In Section \ref{sec_num} we present the non-rigorous argument for helmet counterexamples.  Section \ref{sec_asymptotic} proves that our rigorous method  cannot provide  counterexamples for simply connected sets  having mass close to $4\pi$. This is done  by comparing the first term asymptotic of the whole sphere minus a single hole with the capacity of that hole. Finally, Section \ref{sec:lem_estansatz} establishes various technical estimates used in proving the main theorem. 

\section{Structure of the first and second eigenfunctions on the sphere}
Recall that $\mu_0=0$. The separated form of the eigenspace for $\mu_1(\theta)(:=\mu_1(B_{\theta}))$ of the spherical cap $B_\theta$ is given by the next proposition, which is drawn from \cite[Proposition 4.1]{LL22}. Parts (a) and (b) are distinguished by whether the aperture is smaller or larger than the value $\Theta \simeq 0.7\pi$ that was specified earlier. 

Throughout the paper we adopt spherical coordinates, parametrizing the sphere $\mathbb{S}^2$ by
\[
(t,\phi)\in [0,\pi]\times \R / 2\pi \mathbb{Z} \ \mapsto \ (\sin t \cos \phi,\sin t \sin \phi , \cos t ) .
\]
\begin{proposition} \label{basicsphere} 
If $0<\theta<\pi$ then the eigenspace of $-\Delta_{\Sd} \, u = \mu_1(\theta) u$ on the spherical cap $B_\theta$ with Neumann boundary condition is spanned by two functions of the form 
\[
g(t) \cos \phi \qquad \text{and} \qquad g(t) \sin \phi ,
\]
where the radial part $g$ is smooth and satisfies the Neumann boundary condition $g^\prime(\theta)=0$, the differential equation 
\begin{equation} \label{gequation}
-\frac{1}{\sin t}\left((\sin t)g'(t)\right)'+\frac{1}{\sin^2 t }g(t)=\mu_1(B_\theta)g(t)
\end{equation}
and has $0=g(0)<g(t)$ whenever $t \in (0,\theta]$.

(a) If $0 < \theta \leq \Theta$ then $g(t)$ is strictly increasing for $t \in (0,\theta)$, with $g'>0$ there.

(b) If $\Theta<\theta<\pi$ then $g(t)$ first strictly increases and then strictly decreases; more precisely, a maximum point $t_{max}(\theta)\in (\frac{\pi}{2},\Theta)$ exists such that $g'>0$ on $(0,t_{max}(\theta))$ and $g'<0$ on $(t_{max}(\theta),\theta)$.
\end{proposition}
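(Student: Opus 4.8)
The plan is to analyze the Sturm--Liouville problem \eqref{gequation} on $(0,\theta)$ with the boundary conditions $g(0)=0$ and $g'(\theta)=0$, viewing $\theta$ as a parameter. First I would recall (or re-derive) that the first positive Neumann eigenvalue $\mu_1(B_\theta)$ corresponds, by separation of variables, to the lowest eigenvalue of the radial operator $Lg = -\tfrac{1}{\sin t}((\sin t)g')' + \tfrac{1}{\sin^2 t}g$ on $(0,\theta)$ acting on functions in the weighted space $L^2((0,\theta),\sin t\,\dd t)$ with the natural behaviour $g(0)=0$ forced by integrability (the $1/\sin^2 t$ potential is the $m=1$ angular term), together with $g'(\theta)=0$. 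Standard Sturm--Liouville / ODE uniqueness then gives that the ground state $g$ can be taken strictly positive on $(0,\theta]$ and is smooth up to $t=\theta$; near $t=0$ the indicial analysis of the regular singular point shows $g(t)\sim c\,t$ with $c>0$, giving $g(0)=0<g(t)$ for small $t$, and positivity on all of $(0,\theta]$ follows because a sign change would produce a lower-energy competitor (or contradict that $g$ is a ground state). This handles the assertions common to (a) and (b).

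Next I would study the sign of $g'$. Write the ODE in the form $(\sin t\, g')' = \big(\tfrac{1}{\sin^2 t} - \mu_1(B_\theta)\big)\sin t\, g =: q(t)\sin t\, g(t)$, where $q(t) = \csc^2 t - \mu_1(B_\theta)$. Since $g>0$ on $(0,\theta]$, the sign of $(\sin t\,g')'$ is exactly the sign of $q(t)$. The function $q$ is strictly decreasing on $(0,\pi/2]$ and strictly increasing on $[\pi/2,\pi)$, with minimum $1-\mu_1(B_\theta)$ at $t=\pi/2$; since $\mu_1(B_\theta)>1$ for $\theta$ in the relevant range (indeed $\mu_1(B_\theta)>2$ always when $\theta>$ some value, but at least $\mu_1(B_\theta)>1$ once $\theta$ is not too small, and this is where the threshold $\Theta$ enters), $q$ has exactly one zero $t_0\in(0,\pi/2)$ when $q(\theta)\le 0$, i.e. it changes from $+$ to $-$ there and then, if $\theta>$ the larger root, possibly back. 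The key monotonicity facts are: $\sin t\,g'$ starts at $0$ at $t=0^+$ (since $g'\to c>0$ but $\sin t\to 0$, actually $\sin t\,g'\to 0$) and its derivative is $q\sin t\,g$, which is positive on $(0,t_0)$; hence $\sin t\,g'>0$, so $g'>0$, on $(0,t_0)$.

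To pin down the behaviour of $g'$ for $t>t_0$ I would argue as follows. On $(t_0,\theta)$ we have $q<0$ as long as $t$ stays below the larger root of $q$; since it is known (cited from \cite{LL22} and standard cap spectral theory) that $\mu_1(B_\theta)$ is such that $B_\theta$ has hot spots precisely when $\theta>\Theta$, and more concretely one can show $t_{max}(\theta)\in(\pi/2,\Theta)$, the point is that $\sin t\,g'$ is a decreasing-then-whatever function whose value at $\theta$ is $0$. Combining $\sin t\,g'=0$ at both $t=0^+$ and $t=\theta$ with the sign structure of its derivative ($+$ then $-$ then possibly $+$), a careful bookkeeping forces $\sin t\,g'$ to be positive on $(0,t_{max})$ and negative on $(t_{max},\theta)$ for a single interior point $t_{max}$: this uses that the final $+$ region near $\theta$ (if present, when $\theta$ exceeds the larger root of $q$) cannot bring $\sin t\,g'$ back up to a positive value at $\theta$ because it already had to come down from $0$; one shows instead that in case (b) the larger root of $q$ exceeds $\theta$ or, if not, a convexity/concavity estimate shows $g'$ stays negative. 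The location $t_{max}\in(\pi/2,\Theta)$ comes from: $t_{max}>t_0$ and $t_0<\pi/2$ is too weak, so instead one notes $g'(t_{max})=0$ forces $(\sin t\,g')'$ to have changed sign, i.e. $t_{max}$ lies where the cumulative integral $\int_0^{t_{max}} q\sin t\,g\,\dd t=0$; estimating this integral with the known profile of $g$ places $t_{max}$ in $(\pi/2,\Theta)$, and case (a) ($\theta\le\Theta$) is exactly the regime where $q(\theta)>0$ (equivalently $\mu_1(B_\theta)\sin^2\theta>1$, the defining inequality for $\Theta$ recalled in the excerpt), so $(\sin t\,g')'>0$ throughout $(0,\theta)$ and hence $g'>0$ on all of $(0,\theta)$.

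The main obstacle I expect is the bookkeeping in case (b) to rule out a second sign change of $g'$ and to locate $t_{max}$ in the sharp interval $(\pi/2,\Theta)$ rather than merely $(0,\theta)$: the potential $q$ does become positive again near $t=\pi$, so one must show that either this happens beyond $\theta$ for all relevant $\theta$, or — if it happens inside — that the integral $\int q\sin t\,g$ accumulated on the negative region dominates, so that $\sin t\,g'$ cannot return to $0$ from below before reaching $t=\theta$ and then $g'(\theta)=0$ is consistent only with the stated single-maximum profile. This is precisely the delicate ODE comparison underlying the hot-spot phenomenon, and I would expect the cited results in \cite{LL22} (the sign of $\mu_1(B_\theta)\sin^2\theta-1$ across $\Theta$, and the explicit bounds $0.70\pi<\Theta<0.71\pi$) to be invoked here to close the argument without a fully self-contained numerical estimate.
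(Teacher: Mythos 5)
The paper does not actually prove Proposition~\ref{basicsphere}; it cites it verbatim from \cite[Proposition~4.1]{LL22}, so there is no in-paper proof to compare against. Your Sturm--Liouville route — rewriting \eqref{gequation} as $(\sin t\,g')'=q(t)\sin t\,g$ with $q(t)=\csc^2 t-\mu_1(B_\theta)$ and tracking the sign of $h(t)=\sin t\,g'(t)$ between its zeros at $t=0^+$ and $t=\theta$ — is a sensible and natural approach, and is in the spirit of the ODE analysis carried out in \cite{LL22}.

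However, there is a genuine sign error at the heart of your case distinction. You assert that case~(a), $\theta\le\Theta$, is ``exactly the regime where $q(\theta)>0$ (equivalently $\mu_1(B_\theta)\sin^2\theta>1$).'' But $q(\theta)=\big(1-\mu_1(B_\theta)\sin^2\theta\big)/\sin^2\theta$, so $q(\theta)>0$ is \emph{equivalent to} $\mu_1(B_\theta)\sin^2\theta<1$, which by the quoted monotonicity result from \cite{LL22} characterizes case~(b), not case~(a). This inversion makes your case~(a) argument self-defeating: if $(\sin t\,g')'>0$ on all of $(0,\theta)$ then $h$ would increase strictly from $h(0)=0$, forcing $h(\theta)>0$ and contradicting the Neumann condition $g'(\theta)=0$. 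The correct case~(a) argument runs the other way: since $q(\theta)<0$ there, $q$ crosses zero once at some $t_0\in(0,\pi/2)$ with $\theta\in(t_0,t_1)$; then $h$ increases on $(0,t_0)$ and \emph{strictly decreases} on $(t_0,\theta)$ down to $h(\theta)=0$, which forces $h>0$ on all of $(0,\theta)$. Symmetrically, in case~(b) one has $\theta>t_1$ (the larger root of $q$ lies \emph{below} $\theta$, not above it as you suggest at one point), so $h$ increases on $(t_1,\theta)$ up to $0$, forcing $h<0$ just left of $\theta$; combined with $h>0$ near $0$ and the monotonicity of $h$ on each of $(0,t_0)$, $(t_0,t_1)$, $(t_1,\theta)$ this gives exactly one interior zero $t_{max}$ of $h$. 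Finally, even with the sign corrected, your sketch does not establish the sharp localization $t_{max}\in(\pi/2,\Theta)$ — you acknowledge outsourcing this to \cite{LL22} — and the hand-waving about ``estimating the integral with the known profile of $g$'' would need to be replaced by an actual argument (e.g.\ relating $t_{max}<t_1$ to $\sin t_1=1/\sqrt{\mu_1(B_\theta)}$ and comparing with $\sin\Theta$).
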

The proposition is illustrated in \cite[Figure 2]{LL22}, which we recommend consulting.

\section{  Multiple Neumann eigenvalues on perturbed domains  }\label{sec_conv}

In this section we develop some convergence results for eigenfunctions and  multiple eigenvalues of the Neumann Laplacian on singularly perturbed domains, specifically, on domains  enjoying simultaneous perforation  with  several  small holes and boundary variation. The goal of this section is to establish Proposition \ref{Prop_limholes}, which gives a precise asymptotic for the eigenvalues  in such a situation. 

 The literature is rich in these type of estimates. For instance, we refer   to \cite{MNP00,NS08,FLO23,La12,Ka20} and references therein. However, for the convenience of the reader and since we did not find a reference where all our specific needs are  met, we prefer to produce the computation in a self-contained manner,  based on the approach of Nazarov and Sokolowski \cite{NS08}. 

\subsection{Statement of the result}

Throughout this section, we fix $M$ to be a compact closed $2$-dimensional Riemannian manifold and take an open subset $\Om \subseteq M$ with smooth boundary (which allows also for the case $\partial\Om=\emptyset$). Consistent with the notation used in the introduction for subdomains of the sphere, we write $\mu_i(\Om)$ for the Neumann eigenvalues of the Laplace--Beltrami operator on $\Om$, starting with $\mu_0(\Om)=0$. Let $(u_i)_{i \in \N}$ be a corresponding choice of $L^2$-normalized eigenfunctions. Take: 
\begin{itemize}
\item $N(\geq 1)$ distinct points  $O_1,\hdots,O_N \in \Om$;
\item for each $n=1,\hdots,N$, a conformal local chart $\psi_n$ from a neighborhood of $0$ (say $B_r$ for some small radius $r>0$) in $\R^2$ into $\Om$ with $\psi_n(0)=O_n$. We suppose $r$ is small enough that the sets $(\psi_n(B_r))_{n=1}^N$ are disjoint to each other and do not intersect the boundary. Denote by $\rho_n:B_r\to \R_+$ the conformal factor associated to the pullback of the metric of $M$ through $\psi_n$ (meaning, the pullback of the metric of $M$ by $\psi_n$ is $\rho_n g_0$, where $g_0$ is the euclidean metric on $B_r$). 
\item for each $n=1,\hdots,N$, a smoothly bounded closed set $\om_n$ in $\R^2$ whose interior contains the origin, $0\in\; \stackrel{\circ}{\om}_n$.
\item a bounded integer interval $I\subset \N^*$ such that 
\[
\mu_{\min(I)-1}(\Om)<\mu_{\min(I)}(\Om) = \mu_{\max(I)}(\Om)<\mu_{\max(I)+1}(\Om) .
\]
We denote by $\mu$ the common value of the $\mu_i(\Om)$ for $i\in I$. In particular, note that $\mu>0$.
\item a smooth vector field $\phi$ on $M$ that vanishes outside a small neighborhood of $\partial\Om$ which does not intersect the sets $\psi_n(B_r)$. Write $(\phi^t)_{t\in\R}$ for its associated flow.
\end{itemize}

We define
\[
\Om^\eps=\phi^{\eps^2}\left(\Om\setminus \bigcup_{n=1}^{N}\psi_n(\eps\om_n)\right) .
\]

We will denote by $\nu$ a normal vector on $\partial \Om^\eps$ with the following convention: it points outward of $\psi_n(\eps\om_n)$ (and so points into $\Om^\eps$), and on the exterior boundary $\partial\Om^\eps \setminus \cup_n \psi_n(\partial(\eps\om_n))=\partial\phi^{\eps^2}(\Om)$ it points outward from $\Om^\eps$. Then Stokes' theorem gives for arbitrary vector fields $f$ that $\int_{\Om}\nabla\cdot f=\int_{\partial\phi^{\eps^2}(\Om)} \langle f , \nu \rangle - \sum_{n=1}^{N}\int_{\partial\psi_n(\eps\om_n)}\langle f,\nu\rangle$.\smallbreak

We consider two exterior problems, for each $\om_n$.
\begin{itemize}
\item[1) ]There exists a unique smooth vector field $W_n:\R^2\setminus \om_n\to\R^2$ such that
\[
\begin{cases}
\Delta W_n=0 & (\text{in\ }\R^2\setminus \om_n)\\
W_n(x)=\mathcal{O}_{|x|\to\infty}\left(\frac{1}{|x|}\right)\\
\partial_\nu W_n(x)=-\nu(x)& (\text{on\ }\partial\om_n)
\end{cases}
\]
where $\nu$ is the exterior normal on $\partial \om_n$. This $W_n$ is easily shown to exist; it tends to a finite limit at infinity (no logarithmic term) since $\int_{\partial\om_n} \partial_\nu W_n = - \int_{\partial\om_n}\nu=0$. After subtracting a suitable constant vector, one obtains that $W_n$ vanishes at infinity: $W_n(x)=\mathcal{O}_{|x|\to\infty}(1/|x|)$. 

For $p,q \in \{1,2\}$, let
\[
(M_n)_{p,q}=|\om_n|\delta_{p,q}+\int_{\R^2\setminus{\om_n}} \nabla (W_n)_p \cdot\nabla (W_n)_q
\]
where $(W_n)_p$ is the $p$-th component of $W_n$. Note $M_n$ is a $2\times 2$ symmetric matrix that, as a sum of two positive matrices, is itself positive. Further, $M_n$ determines the behavior of $W_n$ near infinity:
\begin{equation} \label{W_n_M_n}
W_n(x)=\frac{M_n x}{2\pi |x|^2}+\mathcal{O}_{|x|\to \infty} \! \left(\frac{1}{|x|^2}\right) 
\end{equation}
by \cite[formula (15)]{NS08}; we give a proof of this fact in Section \ref{sec:lem_estansatz}. (Incidentally, $M_n$ is linked to the so-called \textit{virtual mass}, which appears in \cite[Note G]{P51}.) 

\item[2]) Similarly, there exists a unique $2\times 2$ matrix-valued function $L_n:\R^2\setminus\om_n\to M_2(\R)$ such that
\[
\begin{cases}
\Delta L_n=0 & (\text{in\ }\R^2\setminus\om_n)\\
\partial_\nu L_n(x)=x\otimes \nu(x) & (\text{on\ }\partial\om_n)\\
L_n(x)=\left(\frac{|\om_n|}{2\pi}\log|x|\right) \! I_2+\mathcal{O}_{|x|\to\infty}\left(\frac{1}{|x|}\right) .
\end{cases}
\]
This time the existence proof uses the divergence theorem to show that 
\[
\int_{\partial\om_n} \partial_\nu L_n(x) = |\om| I_2 = \int_{\partial B_R} \partial_\nu \left(\frac{|\om_n|}{2\pi}\log|x|\right) \! I_2 
\]
for each large $R>0$. 
\end{itemize}

When these exterior problems are defined from a domain $\om$, we may also denote them by $W_\om$ (associated to the virtual mass matrix $M_\om$) and $L_\om$.

The next proposition quantifies how the eigenvalues of $\Om^\eps$ perturb away from the eigenvalues of $\Om$. The formula involves the vector field $\phi$ that was chosen above on a neighborhood of $\partial \Om$. 
\begin{proposition}\label{Prop_limholes}
If the $(\kappa_i)_{i\in I}$ are the (increasingly ordered) eigenvalues of the real symmetric matrix
\[
\left(\int_{\partial\Om}\left(\langle\nabla u_i,\nabla u_j\rangle-\mu u_i u_j\right)\langle\phi,\nu\rangle+\sum_{n=1}^{N}\left(\rho_n(0)\mu |\om_n| u_i(O_n)u_j(O_n)-\langle M_n\nabla (u_i\circ\psi_n)(0),\nabla (u_j\circ\psi_n)(0)\rangle\right)\right)_{\! i,j\in I\times I}
\]
then for some $\eps_0>0$ we have
\[
\left|\mu_i(\Om^\eps)-\mu-\eps^2\kappa_i\right|\leq (\text{const.}) \eps^{2+1/4} 
\]
for all $\eps\in (0,\eps_0)$. 
\end{proposition}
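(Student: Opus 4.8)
\textbf{Proof strategy for Proposition \ref{Prop_limholes}.}

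The plan is to follow the matched-asymptotics / compound-expansion scheme of Nazarov--Sokolowski \cite{NS08}, but carried out in a way that tracks the whole $|I|$-dimensional eigenspace rather than a single simple eigenvalue. First I would set up an \emph{ansatz} for approximate eigenfunctions. Away from the holes and from $\partial\Om$, the leading term should be a linear combination $u = \sum_{i\in I} c_i u_i$ of the unperturbed eigenfunctions belonging to $\mu$. Near each hole $\psi_n(\eps\om_n)$, write the inner (stretched) variable $y = x/\eps$ in the chart $\psi_n$; since $u_i\circ\psi_n$ is smooth, its Taylor expansion at $0$ is $u_i(O_n) + \eps\,\nabla(u_i\circ\psi_n)(0)\cdot y + O(\eps^2)$, and the hole forces a correction built from $W_n$ (to fix the Neumann condition on the linear part) and, at the next order, from $L_n$. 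The multiplicative conformal factor $\rho_n$ enters through the way the flat Laplacian on $B_r$ relates to $\Delta_M$; because $\psi_n$ is conformal, $\Delta_M = \rho_n^{-1}\Delta_{g_0}$ on $\psi_n(B_r)$, and a Neumann harmonic function in the flat chart is also harmonic for $\Delta_M$, which is exactly why $W_n,L_n$ can be imported from the Euclidean exterior problems. Near $\partial\Om$, the boundary is displaced by the flow $\phi^{\eps^2}$, so I would use the standard Hadamard-type first variation: pulling back by $\phi^{\eps^2}$ produces a bilinear form whose $\eps^2$-coefficient is $\int_{\partial\Om}(\langle\nabla u_i,\nabla u_j\rangle - \mu u_i u_j)\langle\phi,\nu\rangle$, which is the first block in the matrix.

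Next I would compute the quadratic-form correction. The key quantities are $Q(u_i,u_j) := \int_{\Om^\eps}\langle\nabla \hat u_i,\nabla\hat u_j\rangle$ and $R(u_i,u_j) := \int_{\Om^\eps}\hat u_i\hat u_j$ for the ansatz functions $\hat u_i$. Using Stokes' theorem as stated just before the proposition, the hole contributions localize to $\int_{\partial\psi_n(\eps\om_n)}\langle\cdot,\nu\rangle$ integrals; substituting the inner expansion and using the defining boundary conditions of $W_n$ and $L_n$, together with the far-field behavior \eqref{W_n_M_n} (so that the $M_n$ matrix appears via $\int_{\R^2\sm\om_n}\nabla(W_n)_p\cdot\nabla(W_n)_q$), yields precisely the terms $\rho_n(0)\mu|\om_n|u_i(O_n)u_j(O_n) - \langle M_n\nabla(u_i\circ\psi_n)(0),\nabla(u_j\circ\psi_n)(0)\rangle$ at order $\eps^2$. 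The $\mu|\om_n|u_i(O_n)u_j(O_n)$ piece comes from the missing mass $\int_{\psi_n(\eps\om_n)}\mu u_i u_j \approx \eps^2\rho_n(0)|\om_n|\mu u_i(O_n)u_j(O_n)$ in the normalization integral $R$, transferred to $Q$ via the eigenvalue relation. Assembling everything, one gets $Q(u_i,u_j) = \mu\,\delta_{ij} + \eps^2 A_{ij} + O(\eps^{2+1/4})$ and $R(u_i,u_j) = \delta_{ij} + \eps^2 B_{ij} + O(\eps^{2+1/4})$ for explicit $A,B$, and the matrix in the statement is $A - \mu B$ up to symmetrization.

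Then I would invoke an abstract perturbation lemma for clusters of eigenvalues: if the restricted quadratic form and mass form on the span of the $|I|$ ansatz functions agree with the true ones up to $O(\eps^{2+1/4})$, and the ansatz functions are almost-orthogonal to the rest of the spectrum (which follows from the spectral gap $\mu_{\min(I)-1}<\mu<\mu_{\max(I)+1}$), then the $|I|$ eigenvalues of $\Om^\eps$ in the cluster satisfy $\mu_i(\Om^\eps) = \mu + \eps^2\kappa_i + O(\eps^{2+1/4})$ where $\kappa_i$ are the eigenvalues of the reduced matrix. Concretely this is a $\min$--$\max$ / Courant--Fischer argument: test the Rayleigh quotient on the ansatz span for the upper bound, and use the near-completeness of the ansatz span together with the gap for the lower bound.

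\textbf{Main obstacle.} The delicate point — and the source of the non-sharp exponent $\eps^{1/4}$ rather than $\eps^2$ — is controlling the \emph{remainder} of the ansatz: one must verify that $\Delta_M\hat u_i + \mu\hat u_i$ and the Neumann defect on $\partial\Om^\eps$ are small in the right norms, uniformly in the cluster, and that the cross-terms between the hole corrections at different $O_n$ and between the holes and the boundary perturbation are genuinely higher order. The matching region between the inner scale $\eps$ and the outer scale, together with the slow $1/|x|$ decay of $W_n$ (which only gives $\eps/|x|$-type tails, not exponentially small ones), is what limits the gain; estimating these error terms carefully — in particular bounding $\|\hat u_i\|_{H^1}$-type quantities of the correctors and the interaction integrals — is the technical heart of the argument, and is presumably where the detailed computations in Section \ref{sec:lem_estansatz} are needed.
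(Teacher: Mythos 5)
Your ansatz for the inner region (corrections built from $W_n$ and $L_n$ in the stretched variable $y=x/\eps$, with the conformal factor $\rho_n$ reconciling the flat and Riemannian Laplacians) and for the boundary variation (Hadamard shape derivative through the flow $\phi^{\eps^2}$) is essentially the one the paper uses, and the way you identify the $\rho_n(0)\mu|\om_n|u_i(O_n)u_j(O_n)$ term with the missing mass and the $\langle M_n\cdot,\cdot\rangle$ term with the far-field of $W_n$ is consistent with formulas \eqref{eq_Fni} and \eqref{W_n_M_n}.

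However, your ansatz is missing a necessary ingredient: the \emph{outer corrector} $v_i$ solving \eqref{vdef}. The terms $u_i + \eps\,\zeta\,\langle W_n(\cdot/\eps),\nabla\tilde u_i(0)\rangle + \eps^2\,\zeta\,(\ldots L_n\ldots)$ leave a bulk residual $(\Delta+\mu_\eps)\widehat u_{i,\eps}$ of size $\mathcal{O}(\eps^2)$ coming from $\kappa_i u_i$, from the cutoff of the singular profiles (which produces $F_{n,i}$), and from $Ku_i$; these must be cancelled by adding $\eps^2 v_i$ to push the residual down to $\mathcal{O}(\eps^{2+1/2})$, without which you cannot reach the claimed $\mathcal{O}(\eps^{2+1/4})$ accuracy. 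Crucially, the Fredholm solvability condition for $v_i$ against $u_j$, $j\in I$, is precisely what forces the matrix in the statement to appear and determines the $\kappa_i$ --- so this corrector is not a technical refinement but the mechanism by which the matrix arises.

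For the eigenvalue-counting step you propose a Rayleigh quotient / min--max argument on the ansatz span, whereas the paper works with the compact resolvent $R_\eps$ on $H^1(\Om^\eps)$, proves $\|R_\eps U_{i,\eps}-\lambda_{i,\eps}U_{i,\eps}\|_{H^1}\leq C\eps^{2+1/2}$ via Lemma \ref{lem_estansatz}, and deduces from the spectral projections $P_\eps^{A_{i,\eps}}$ that each small interval $A_{i,\eps}$ captures an eigenvalue of the correct index. These are two routes to the same end, but the resolvent route handles a point you glide over: when the $\kappa_i$, $i\in I$, are not all distinct, the intervals $A_{i,\eps}$ overlap and one must partition $I$ by common $\kappa$-values, proving that the projected ansatz functions with the same $\kappa$-value are linearly independent (via convergence of the Gram matrix to the identity). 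A min--max argument can be made to work, but you would need to supply the analogue of this multiplicity bookkeeping, and to justify the ``near-completeness'' lower bound, which is where the spectral gap $\mu_{\min(I)-1}<\mu<\mu_{\max(I)+1}$ and the uniform $H^1$-extension across the holes (Lemma \ref{lem_extension_holes}) are actually used.
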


In the case where $\om$ is simply connected and smooth, the matrix $M_\om$ (associated to the domain $\om$) may be computed as follows: write  $D$ for the unit disk in $\R^2 \simeq {\mathbb C}$ and denote by $T_\om:\R^2\setminus D \to \R^2\setminus \om$ the unique biholomorphic conformal mapping that maps $\infty$ to $\infty$ with the normalization
\[
T_\om(z)=\cp[\om]\left(z+\sum_{k\geq 0}c_k[\om]z^{-k}\right)
\]
where $\cp[\om]$ is the logarithmic capacity of $\om$. Let $e_\theta=\begin{pmatrix}\cos \theta \\ \sin \theta \end{pmatrix}$. Then
\begin{equation}\label{eq_computation_M}
\langle M_\om e_\theta,e_\theta\rangle=2\pi\cp[\om]^2\left(1-\text{Re}(c_1[\om]e^{-2i\theta})\right)
\end{equation}
by using $\varphi=W \cdot e_\theta$ and $\vec{h}=e_\theta$ in \cite[eq. (11) p. 244]{P51}. In particular, $\text{Tr}(M_\om)=4\pi\cp[\om]^2$.

\subsection{Definition of the ansatz}\label{subsec_ansatz}
First we choose a $L^2(\Om)$-orthonormal basis of the eigenspace associated to $\mu$ (denoted $(u_i)_{i\in I}$) such that the symmetric matrix of Proposition \ref{Prop_limholes} is diagonal, with the $(i,i)$ coefficient being $\kappa_{i}$.\\

We let $\zeta\in\mathcal{C}^\infty_c(B_r,\R)$ be a function that is equal to $1$ in a neighborhood of $0$. We then let $F_{n,i}:\R^2\setminus\{0\}\to\R$ be the function defined by
\begin{equation}\label{def_Fni}
F_{n,i}(x)=(\rho_n^{-1}\Delta+\mu)\left[\zeta(x)\left(\frac{\langle x,M_n\nabla (u_i\circ\psi_n)(0)\rangle}{2\pi|x|^2}+\frac{\rho_n(0)\mu |\om_n|u_i(O_n)}{2\pi}\log|x|\right)\right] .
\end{equation}

Since $x/|x|^2$ and $\log |x|$ are harmonic on $\R^2\setminus\{0\}$, their Laplacians vanish. Thus the singularity of $F_{n,i}(x)$ near the origin arises only from the factor of $\mu$ times $x/|x|^2$ and $\log |x|$, and so $F_{n,i}$ is dominated there by $|x/|x|^2|=1/|x|$. Hence $F_{n,i} \in L^p(\R^2)$ for all $p\in [1,2)$. 

Let $\Delta_t$ be the laplacian of the metric $g_t$ obtained by pulling back the metric of $M$ through the flow $\phi^{t}$, and $\nu_t$ be the exterior normal vector of $\partial \Om$ for this same metric (meaning $\nu_t=(\phi^t)^*\nu_{\partial\phi^t(\Om)}$ where $\nu_{\partial \phi^t(\Om)}$ is the exterior normal vector of $\partial \phi^t(\Om)$), and denote
\[
Kf:=\lim_{t\to 0}\frac{\Delta_t f-\Delta f}{t},\qquad \eta=\lim_{t\to 0}\frac{\nu_t -\nu}{t}\text{ on }\partial\Om.
\]
Notice $Kf$ vanishes outside the neighborhood of $\partial \Om$ on which the flow acts nontrivially.

We assert that for every $i$, the following equation defines a unique function $v_i \in H^1(\Om)$:
\begin{equation}
\begin{split}
-\Delta v_i -\mu v_i & = \kappa_i u_i+Ku_i+\sum_{n=1}^{N}F_{n,i}\circ\psi_n^{-1} \qquad (\text{in }\Om) , \\
\partial_\nu v_i & = -\langle \eta, \nabla u_i\rangle \qquad (\text{on }\partial\Om) .\\
\int_{\Om}v_iu_j&=0,\ \forall j\in I
\end{split} \label{vdef}
\end{equation}
Indeed, to justify the solvability it suffices to observe that the righthand side of the equation in $\Om$ belongs to $L^p$ for some $p>1$ (in fact, for all $p\in (1,2)$) and then to show for each $j\in I$ that  
\[\int_{\Om}\left(\kappa_i u_i+Ku_i+\sum_{n=1}^{N}F_{n,i}\circ\psi_n^{-1}\right)u_j=-\int_{\partial\Om} (\partial_\nu v_i) u_j.\]
This last assertion is equivalent to
\[\kappa_i\delta_{i,j}=-\int_{\Om}\left(Ku_i+\sum_{n=1}^{N}F_{n,i}\circ\psi_n^{-1}\right)u_j+\int_{\partial\Om}\langle\eta,\nabla u_i\rangle \, u_j.\]
It may be checked that the right-hand side above is exactly the matrix of Proposition \ref{Prop_limholes}, and therefore this equation can be verified by our definitions of the $(\kappa_i)_{i\in I}$: we do not give the full computation, but instead point out that
\begin{equation}\label{formula_shapederivative}
-\int_{\Om}(Ku_i)u_j+\int_{\partial\Om}\langle \eta , \nabla u_i\rangle \, u_j=\int_{\partial\Om}\left(\langle \nabla u_i,\nabla u_j \rangle -\mu u_i u_j\right)\langle \phi,\nu\rangle
\end{equation}
by shape derivative computations with Neumann boundary conditions (see Section \ref{sec:lem_estansatz} for details that are essentially based on the computations in Fall and Weth \cite{Fall}) and
\begin{equation} \label{eq_Fni}
-\int_{\Om}(F_{n,i}\circ\psi_n^{-1})u_j=\rho_n(0)\mu |\om_n| u_i(O_n)u_j(O_n)-\langle M_n\nabla (u_i\circ\psi_n)(0),\nabla (u_j\circ\psi_n)(0)\rangle
\end{equation}
by an application of Green's theorem (for details, see Section \ref{sec:lem_estansatz}).

We may then define the ansatz: for $i \in I$ and $\eps>0$, let 
\begin{equation}\label{eq_ansatz}
\begin{split}
\widehat{u}_{i,\eps}(x)&=u_i(x)+\eps^2 v_i(x)+\eps\sum_{n=1}^{N}\zeta(\psi_n^{-1}(x)) \langle W_n(\psi_n^{-1}(x)/\eps),\nabla (u_i\circ\psi_n)(0)\rangle\\
&+\eps^2\sum_{n=1}^{N}\zeta(\psi_n^{-1}(x))  \left( - L_n(\psi_n^{-1}(x)/\eps):D^2 (u_i\circ\psi_n)(0)+\frac{\rho_n(0)\mu|\om_n|u_i(O_n)}{2\pi}\log \eps\right)\\
& \text{ for }x\in\Om\setminus\cup_{n=1}^{N}\psi_n(\eps\om_n),\text{ where $D^2$ is the Hessian operator and }A:B \overset{\text{def}}{=}\sum_{k,\ell=1}^2 A_{k,\ell}B_{k,\ell} , \\
u_{i,\eps}&=\widehat{u}_{i,\eps}\circ\phi^{-\eps^2}\text{ (defined on }\Om^\eps\text{)},\\
\mu_{i,\eps}&=\mu+\eps^2\kappa_i .
\end{split}
\end{equation}
(In the definition of $\widehat{u}_{i,\eps}(x)$, one takes $\zeta(\psi_n^{-1}(x))=0$ when $x \notin \psi_n(B_r)$.)

\begin{lemma}\label{lem_estansatz}
There exists some constant $C>0$ such that for every small enough $\eps>0$ and every $z\in H^1(\Om^\eps)$ with unit norm, we have
\[\int_{\Om^\eps}\left(\mu_{i,\eps}u_{i,\eps}z-\langle \nabla u_{i,\eps},\nabla z \rangle \right)\leq C\eps^{2+\frac{1}{2}}.\]
\end{lemma}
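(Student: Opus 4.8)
The plan is to verify Lemma~\ref{lem_estansatz} by direct substitution of the ansatz \eqref{eq_ansatz} into the weak formulation and then estimating each error term, keeping careful track of powers of $\eps$. Since the ansatz is defined as a pullback by $\phi^{-\eps^2}$, the first reduction is to change variables so that everything lives on the fixed punctured domain $\Om\setminus\bigcup_n\psi_n(\eps\om_n)$: the integral $\int_{\Om^\eps}(\mu_{i,\eps}u_{i,\eps}z-\langle\nabla u_{i,\eps},\nabla z\rangle)$ becomes an integral over $\Om\setminus\bigcup_n\psi_n(\eps\om_n)$ against $\ha z:=z\circ\phi^{\eps^2}$, but now with the pulled-back metric $g_{\eps^2}$ instead of the original one. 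Expanding $\Delta_{\eps^2}=\Delta+\eps^2 K+\mathcal{O}(\eps^4)$ and similarly for the boundary normal $\nu_{\eps^2}=\nu+\eps^2\eta+\mathcal{O}(\eps^4)$, I would write the quantity to be bounded as $\int (\mu_{i,\eps}\ha u_{i,\eps}\ha z + \ha z\,\Delta_{\eps^2}\ha u_{i,\eps}) - \int_{\partial}(\partial_{\nu_{\eps^2}}\ha u_{i,\eps})\ha z$ via Stokes, so that the task becomes showing that $(\mu_{i,\eps}+\Delta_{\eps^2})\ha u_{i,\eps}$ is small in $L^{p}$ (some $p>1$) on the bulk and that $\partial_{\nu_{\eps^2}}\ha u_{i,\eps}$ is small on both pieces of the boundary $\partial\Om^\eps$, then invoking Hölder together with the Sobolev/trace embedding $H^1(\Om^\eps)\hookrightarrow L^{p'}$ uniformly in $\eps$ (a uniform extension/trace estimate for these perforated domains is needed here, and can be taken from the cited literature).

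The core of the computation is the cancellation structure built into the ansatz. On the bulk: the term $u_i$ contributes $(\mu-\mu)u_i$ up to the $\eps^2 K u_i$ correction; the term $\eps^2 v_i$ is designed via \eqref{vdef} so that $-\Delta v_i-\mu v_i$ exactly cancels $\kappa_i u_i + Ku_i + \sum_n F_{n,i}\circ\psi_n^{-1}$; the $\eps$-order term built from $W_n$ is harmonic away from the holes and contributes, after multiplying by $\rho_n^{-1}$ and adding $\mu$, precisely the leading singular part that is subtracted off inside $F_{n,i}$ — this is exactly why $F_{n,i}$ was defined in \eqref{def_Fni} as $(\rho_n^{-1}\Delta+\mu)$ applied to the truncated singular profile; and similarly the $\eps^2$-order terms with $L_n$ and the $\log\eps$ constant are calibrated to kill the next-order singular contributions and the logarithmic divergence. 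What remains after all these designed cancellations are genuine error terms: (i) the mismatch between $\rho_n(\psi_n^{-1}(x)/\eps\cdot\eps)$ — i.e. $\rho_n$ evaluated near $O_n$ — and its value $\rho_n(0)$ at the center, which is $\mathcal{O}(\eps)$ and multiplies things of size $\mathcal{O}(\eps)$ or $\mathcal{O}(\eps^2)$, but one must check the $L^p$ norms because of the $1/|x|$ singularities; (ii) commutators between the cutoff $\zeta$ and the Laplacian, supported away from the origin where $W_n,L_n$ decay like $1/|x|\sim\eps$ and $\log|x|$, giving terms of size $\mathcal{O}(\eps^{2}\log(1/\eps))$ or better; (iii) Taylor remainders from replacing $u_i\circ\psi_n$ and its derivatives by their values/first derivatives/Hessian at $0$; (iv) the $\mathcal{O}(\eps^4)$ remainders in the metric expansion acting on $u_i$.

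On the boundary $\partial\Om^\eps$ one must treat the two pieces separately. On the exterior boundary $\partial\phi^{\eps^2}(\Om)$, after pulling back, the relevant normal derivative is $\partial_{\nu_{\eps^2}}(u_i+\eps^2 v_i+\text{(holes terms)})$; the hole terms vanish near $\partial\Om$ because $\zeta$ is supported near the origins $O_n$, the $u_i$ piece gives $\partial_\nu u_i + \eps^2\langle\eta,\nabla u_i\rangle + \mathcal{O}(\eps^4)=0+\eps^2\langle\eta,\nabla u_i\rangle+\mathcal{O}(\eps^4)$ since $\partial_\nu u_i=0$, and the $\eps^2 v_i$ piece gives $\eps^2\partial_\nu v_i=-\eps^2\langle\eta,\nabla u_i\rangle$ by the Neumann condition in \eqref{vdef} — so these cancel to leave $\mathcal{O}(\eps^4)$. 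On the hole boundaries $\psi_n(\partial(\eps\om_n))$, after rescaling $x=\eps\psi_n(\,\cdot\,)$ the leading $\eps$-term's normal derivative is designed via the exterior problem $\partial_\nu W_n=-\nu$ to cancel the normal derivative of the linearization of $u_i$ there, and the $\eps^2$ $L_n$-term via $\partial_\nu L_n=x\otimes\nu$ cancels the Hessian contribution; what survives are again Taylor remainders and the $\rho_n$-variation, of size $\mathcal{O}(\eps^3)$ after accounting for the $\mathcal{H}^1$-measure $\mathcal{O}(\eps)$ of the hole boundary. Multiplying these boundary bounds by $\|\ha z\|_{H^{1/2}(\partial)}\lesssim\|z\|_{H^1(\Om^\eps)}=1$ gives the claimed $\eps^{2+1/2}$ (indeed the dominant genuine error is $\mathcal{O}(\eps^{5/2})$, coming from a term of size $\eps^{3/2}$ after pairing against the trace, which is why the exponent is $2+\tfrac12$ and not higher).

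The main obstacle I anticipate is not any single cancellation but the bookkeeping of the singular $L^p$ norms: the profiles $W_n(\cdot/\eps)$ and $L_n(\cdot/\eps)$ are $\mathcal{O}(1)$ on the hole scale $|x|\sim\eps$ and decay only polynomially, so quantities like $\|\eps^2 v_i\|$, $\|F_{n,i}\|_{L^p}$, and the cutoff-commutator terms must be estimated on the annular region $\eps\lesssim|x|\lesssim r$ where the $1/|x|$ and $\log|x|$ behaviors interact with the $\eps$-weights; one has to confirm that the worst of these is genuinely $o(\eps^2)$ relative to the pairing with a unit-$H^1$ test function and, crucially, that the implied constants are uniform in $\eps$. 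A secondary technical point is establishing the uniform-in-$\eps$ trace and Sobolev constants on the perforated, boundary-perturbed domains $\Om^\eps$, which I would handle by a uniform extension operator (the holes shrink and the boundary perturbation is $\mathcal{O}(\eps^2)$, so this is standard but must be stated).
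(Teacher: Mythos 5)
Your overall strategy matches the paper's: decompose via Green's theorem into a bulk residual and boundary integrals, exploit the designed cancellations in the ansatz (the PDE for $v_i$, the role of $F_{n,i}$, the boundary conditions for $W_n$ and $L_n$, the Neumann cancellation $\partial_\nu v_i=-\langle\eta,\nabla u_i\rangle$), and then estimate the leftover Taylor remainders. Two points, however, need attention. First, your treatment of the hole-boundary integral $\int_{\partial(\eps\om_n)}\tilde z\,\partial_\nu\tilde u_\eps$ relies on a trace pairing, asserting $\|\ha z\|_{H^{1/2}(\partial)}\lesssim\|z\|_{H^1(\Om^\eps)}$ and combining it with pointwise bounds on $\partial_\nu\tilde u_\eps$; but the trace constant onto a circle of radius $\sim\eps$ is $\eps$-dependent, and the bookkeeping you sketch (``size $\eps^{3/2}$ after pairing against the trace'') does not follow cleanly from the numbers you quote. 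The paper sidesteps this entirely by harmonically extending $\tilde z$ into the hole via Lemma \ref{lem_extension_holes} (a conformal-inversion argument with $\eps$-independent constant), then using the divergence theorem to convert $\int_{\partial(\eps\om_n)}\tilde z\,\partial_\nu\tilde u_\eps$ into an integral over the solid set $\eps\om_n$, where Hölder against $\|\tilde z\|_{L^4}$ or $\|\tilde z\|_{L^{12}}$ and the area factor $|\eps\om_n|^{1-1/p}$ give the exponent $\eps^{5/2}$ with uniform constants. That is the technical device you are missing.

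Second, your claimed bound on the cutoff-commutator terms, ``$\mathcal{O}(\eps^2\log(1/\eps))$ or better,'' is too weak to yield the lemma, since $\eps^2\log(1/\eps)\gg\eps^{5/2}$. The point — which you mention when explaining the role of $F_{n,i}$ but do not carry into your error list — is that on the support of $\nabla\zeta$ (a fixed annulus $|x|\sim r$), the ansatz's $W_n$ and $L_n$ pieces combine with the contribution from $v_i$ (namely $F_{n,i}$) so that only the remainders $W_n(\cdot)-\tfrac{M_n\,\cdot}{2\pi|\cdot|^2}=\mathcal{O}(|\cdot|^{-2})$ and $L_n(\cdot)-\tfrac{|\om_n|}{2\pi}(\log|\cdot|)I_2=\mathcal{O}(|\cdot|^{-1})$ survive there; evaluated at $x/\eps$ with $|x|\sim r$ these are $\mathcal{O}(\eps^2)$ and $\mathcal{O}(\eps)$ respectively, and after the outer $\eps$ and $\eps^2$ prefactors the whole commutator contribution is pointwise $\mathcal{O}(\eps^3)$ — the paper records this as a $C\eps^3/|x|^2$ bound on the full annulus $B_r\setminus\eps\om_n$, which paired against $\tilde z\in L^4$ via Hölder gives $\eps^{5/2}$. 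Without this cancellation in the error budget, your estimate does not close.
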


We defer the proof of this lemma to Section \ref{sec:lem_estansatz}.

\subsection{Proof of Proposition \ref{Prop_limholes}}

For the proof it is more convenient to work with a compact operator on $\Om^\eps$ rather than an unbounded one. Denote by 
\[\langle u,v\rangle_{H^1(\Om^\eps)}=\int_{\Om^\eps}\left( \langle \nabla u , \nabla v \rangle +u v\right)\]
the inner product that defines the norm on $H^1(\Om^\eps)$.

We first define, for a domain $V\subset M$, the operator $S_V:L^2(M)\to L^2(M)$ such that for any $f\in L^2(M)$ we have $S_V f=0$ in $M\setminus V$ and $S_Vf|_{V}\in H^1(V)$, and
\[\begin{cases}
S_Vf-\Delta (S_Vf)=f & (\text{in }V)\\
\partial_{\nu_V}S_V f=0 & (\text{on }\partial V)
\end{cases}\]
and such that the associated eigenvalues of $S_V$ are exactly the numbers $\left(\frac{1}{1+\mu_i(V)}\right)_{\! i\in\N}$ (and $0$). 

These eigenvalues are all bounded by $1$ and so the operator norm is likewise bounded: $\lVert S_V \rVert \leq 1$.

We claim  $S_{\Om^\eps}$ converges to $S_\Om$ in the $L^2(M)$-operator norm as $\eps \to 0$. Indeed  $\Om^\eps$ (and its complement) converges in the Hausdorff sense as $\eps \to 0$ to $\Om\setminus\{O_1,\hdots,O_N\}$ (and its complement), which is equivalent to $\Om$ with regard to the Laplace problem. Moreover, the set
\begin{equation} \label{uniformextension}
\bigcup_{0<\eps<1}\{u\in L^2(M):\text{Spt}(u)\subset\Om^\eps,\ \Vert u
\Vert_{H^1(\Om^\eps)}\leq 1\}
\end{equation}
is compact in $L^2(M)$, since there is a uniform $H^1$-extension operator to a neighborhood of $\bigcup_{0<\eps<1}\Om^\eps$ (achieved by extending harmonically into the holes using Lemma \ref{lem_extension_holes} in Section \ref{sec:lem_estansatz}, and taking a standard $H^1$-extension across the outer boundary of $\Om^\eps$ to obtain an extended function with compact support in the neighborhood). To prove the operator norm convergence, consider a bounded sequence $(f_\eps)_\eps$ in $L^2(M)$. It is sufficient to prove that $\Vert S_{\Om}f_\eps-S_{\Om^\eps}f_\eps\Vert_{L^2(M)}\to 0$. In particular, it suffices to do so along every subsequence. From any subsequence we may extract another subsequence (denoted still $f_\eps$, for simplicity) such that $f_\eps$ converges $L^2(M)$-weakly to some $f\in L^2(M)$. Since $S_\Om$ is compact (by Rellich's theorem), $S_\Om f_\eps$ converges in $L^2(M)$ to its limit $v=S_\Om f$ that is characterized by the weak form of the equation:
\begin{equation} \label{weak1}
\langle \nabla v,\nabla \varphi\rangle_{L^2(M)}+\langle v,\varphi\rangle_{L^2(M)}=\langle f\chi_{\Om},\varphi\rangle_{L^2(M)}, \qquad \varphi\in\mathcal{C}^\infty(M) ,
\end{equation}
where $\nabla v=0$ outside $\Om$ by convention. Now let $v_\eps=S_{\Om^\eps}f_\eps$, so that $\Vert v_\eps\Vert_{H^1(\Om^\eps)}$ is bounded independently of $\eps$. The compactness in \eqref{uniformextension} above yields a subsequence $v_{\eps_k}$ that converges in $L^2(M)$ to some $w\in L^2(M)$. Similarly, $\nabla v_{\eps_k}$ (which we recall is extended by $0$ outside $\Om^{\eps_k}$) is bounded in $L^2(M)$, and so some subsequence converges $L^2(M)$-weakly to a limiting vector field $G\in L^2(M)$. By the Hausdorff convergence of the domains, $w$ and $G$ are supported in $\Om$. Using test functions supported in $\Om$, one finds $w$ is weakly differentiable there with $\nabla w=G$. Next, the partial differential equation satisfied by $v_{\eps_k}$ says in its weak form that  
\[
 \langle \nabla v_{\eps_k},\nabla \varphi\rangle_{L^2(M)}+\langle v_{\eps_k},\varphi\rangle_{L^2(M)}=\langle f\chi_{\Om^ {\eps_k}},\varphi\rangle_{L^2(M)} , \qquad \varphi\in\mathcal{C}^\infty(M) .
\]
Passing to the limit as $\eps_k\to 0$ implies that
\begin{equation} \label{weak2}
 \langle \nabla w,\nabla \varphi\rangle_{L^2(M)}+\langle w,\varphi\rangle_{L^2(M)}=\langle f\chi_{\Om},\varphi\rangle_{L^2(M)} , \qquad \varphi\in\mathcal{C}^\infty(M) .
\end{equation}
Thus the functions $v$ in \eqref{weak1} and $w$ in \eqref{weak2} satisfy the same variational characterization, so that $w=v$ by uniqueness. Hence  $S_{\Om^{\eps_k}}f_{\eps_k} = v_{\eps_k} \to w = v = S_\Om f$. That concludes the proof that $S_{\Om^\eps}\to S_\Om$ in the operator norm.

As a consequence, for every $i$ we have 
\begin{equation} \label{convergencezerothorder}
\mu_i(\Om^\eps)\overset{\eps\to 0}{\longrightarrow}\mu_i(\Om) .
\end{equation}

Now define a compact resolvent operator $R_\eps:H^1(\Om^\eps)\to H^1(\Om^\eps)$ by
\begin{equation} \label{resolventdefn}
\forall u,v\in H^1(\Om^\eps): \quad \langle R_\eps u,v\rangle_{H^1(\Om^\eps)}=\langle u,v\rangle_{L^2(\Om^\eps)} .
\end{equation}
In other words, $R_\eps u$ verifies $R_\eps u-\Delta R_\eps u=u$ in $\Om^\eps$ and $\partial_\nu R_\eps u=0$ on $\partial\Om^\eps$.

Let $(\mu_i(\Om^\eps),u_i(\Om^\eps))_{i\in \N}$ be the eigenvalues and eigenfunctions of $-\Delta$ on $\Om^\eps$ with Neumann conditions, normalized in $L^2(\Om^\eps)$. Then the normalized eigenpairs associated to $R_\eps$ are exactly 
\[\left(\frac{1}{1+\mu_i(\Om^\eps)},\frac{u_i(\Om^\eps)}{\Vert u_i(\Om^\eps)\Vert_{H^1(\Om^\eps)}}\right).\]
where the eigenvalues are this time enumerated in a decreasing way. For an interval $A\subset\R$ and $u\in H^1(\Om^\eps)$, we define the projection operator
\[P_\eps^{A}u=\sum_{i:(1+\mu_i(\Om^\eps))^{-1}\in A}\left\langle u,\frac{u_i(\Om^\eps)}{\Vert u_i(\Om^\eps)\Vert_{H^1(\Om^\eps)}}\right\rangle_{\!\! H^1(\Om^\eps)} \frac{u_i(\Om^\eps)}{\Vert u_i(\Om^\eps)\Vert_{H^1(\Om^\eps)}}.\]

Let now $\lambda_{i,\eps}=\frac{1}{1+\mu_{i,\eps}}$, $U_{i,\eps}=\frac{u_{i,\eps}}{\Vert u_{i,\eps}\Vert_{H^1(\Om^\eps)}}$. Note that $\Vert u_{i,\eps}\Vert_{H^1(\Om^\eps)} = \sqrt{\mu_i(\Om^\eps)+1} \overset{\eps\to 0}{\longrightarrow} \sqrt{\mu+1}$. Then
\begin{align}
\Vert R_\eps U_{i,\eps}-\lambda_{i,\eps} U_{i,\eps}\Vert_{H^1(\Om^\eps)}&=\frac{1}{(1+\mu_{i,\eps})\Vert u_{i,\eps}\Vert_{H^1(\Om^\eps)}}\Vert (1+\mu_{i,\eps})R_\eps u_{i,\eps}-u_{i,\eps}\Vert_{H^1(\Om^\eps)} \notag \\
&=\mathcal{O}(1)\sup_{\Vert z\Vert_{H^1(\Om^\eps)}\leq 1}\langle (1+\mu_{i,\eps})R_\eps u_{i,\eps},z\rangle_{H^1(\Om^\eps)}-\langle u_{i,\eps},z\rangle_{H^1(\Om^\eps)} \notag \\
&=\mathcal{O}(1)\sup_{\Vert z\Vert_{H^1(\Om^\eps)}\leq 1}\langle (1+\mu_{i,\eps})u_{i,\eps},z\rangle_{L^2(\Om^\eps)}-\langle u_{i,\eps},z\rangle_{H^1(\Om^\eps)} \qquad \text{by \eqref{resolventdefn}} \notag \\
&=\mathcal{O}(1)\sup_{\Vert z\Vert_{H^1(\Om^\eps)}\leq 1}\int_{\Om^\eps}\left(\mu_{i,\eps} u_{i,\eps} z- \langle \nabla u_{i,\eps} , \nabla z \rangle \right) \notag \\
&\leq C\eps^{2+\frac{1}{2}} \label{fivehalves}
\end{align}
by Lemma \ref{lem_estansatz}. Consequently, defining the interval $A_{i,\eps}=[\lambda_{i,\eps}-\eps^{2+\frac{1}{4}},\lambda_{i,\eps}+\eps^{2+\frac{1}{4}}]$, we have
\begin{align}
& \left\Vert U_{i,\eps}-P_\eps^{A_{i,\eps}}U_{i,\eps}\right\Vert_{H^1(\Om^\eps)}^2 \notag \\
&=\sum_{j:(1+\mu_j(\Om^\eps))^{-1}\notin A_{i,\eps}}\left\langle U_{i,\eps},\frac{u_j(\Om^\eps)}{\Vert u_j(\Om^\eps)\Vert_{H^1(\Om^\eps)}}\right\rangle_{\!\! H^1(\Om^\eps)}^{\!\! 2} \notag \\
& \leq\eps^{-4-\frac{1}{2}} \sum_{j:(1+\mu_j(\Om^\eps))^{-1}\notin A_{i,\eps}}\left((1+\mu_j(\Om^\eps))^{-1}-\lambda_{i,\eps}\right)^2\left\langle U_{i,\eps},\frac{u_j(\Om^\eps)}{\Vert u_j(\Om^\eps)\Vert_{H^1(\Om^\eps)}}\right\rangle_{\!\! H^1(\Om^\eps)}^{\!\! 2} \notag 
\\
&\leq \eps^{-4-\frac{1}{2}}\Vert (R_\eps -\lambda_{i,\eps})U_{i,\eps}\Vert_{H^1(\Om^\eps)}^2 \notag \\
&\leq C\eps^\frac{1}{2} \label{eps14}
\end{align}
by squaring \eqref{fivehalves}. 

When $\eps<C^{-2}$, this inequality already gives that $P_\eps^{A_{i,\eps}}U_{i,\eps}\neq 0$, and so $A_{i,\eps}$ contains some eigenvalue of $\Om^\eps$. To finish proving the proposition, it is necessary to be a bit more attentive about the multiplicity of the eigenvalues $(\kappa_i)_{i\in I}$. By the convergence of the eigenvalues in \eqref{convergencezerothorder}, there exists $\delta>0$ such that for all sufficiently small $\eps>0$, 
\[
\mu_i(\Om^\eps)\in [\mu-\delta,\mu+\delta]\text{ if and only if }i\in I.
\]
This insures that the eigenvalues we obtain in the intervals $A_{i,\eps}$ have the correct index (belonging to $I$). We partition $I$ into $\bigcup_{r=1}^{s} I_r$ such that $i,j$ are in the same subinterval of the partition if and only if $\kappa_i=\kappa_j$. Let $B_{r,\eps}=A_{i,\eps}$ for any $i\in I_r$. Then for small enough $\eps$, the intervals $(B_{r,\eps})_{r=1,\hdots,s}$ are disjoint. We also argue that for $\eps$ small enough, the functions
\[\left(P^{B_{r,\eps}}_\eps U_{i,\eps}\right)_{i\in I_r}, \qquad r=1,\ldots,s,\]
are linearly independent. Independence is clear between different $r$-values due to disjointness of the $B_{r,\eps}$. For functions with the same $r$-value, the matrix $\left(\langle P^{B_{r,\eps}}_\eps U_{i,\eps},P^{B_{r,\eps}}_\eps U_{j,\eps}\rangle_{H^1(\Om^\eps)}\right)_{i,j\in I_r}$ of inner products converges by \eqref{eps14} to the same limit (as $\eps \to 0$) as the matrix $\left(\langle  U_{i,\eps}, U_{j,\eps}\rangle_{H^1(\Om^\eps)}\right)_{i,j\in I_r}$, which converges to the identity. 

Since the total number of independent eigenfunctions on $\Om^\eps$ associated to eigenvalues in the interval $[\mu-\delta,\mu+\delta]$ is exactly $|I|$, and since  
\[
(1+\mu_i(\Om^\eps))^{-1}\in A_{i,\eps} \quad \Longrightarrow \quad |\mu_i(\Om^\eps) - (\mu+\eps^2 \kappa_i)| \leq C \eps^{2+\frac{1}{4}} ,
\]
we see the proof of Proposition \ref{Prop_limholes} is complete.

\section{Proof of the main result, Theorem \ref{main_th}}\label{blmn02}
Let us first treat the case $\theta \in (\Theta,\pi)$. Take $t=t_{max}(\theta)\in (\pi/2,\Theta)$ as in Proposition \ref{basicsphere}(b), noting that $g^\prime(t)=0$ and hence $\mu_1(B_t)=\mu_1(B_\theta)$. We consider the four equally spaced points $x_0, x_1, x_2, x_3$ at latitude $t$ on the unit sphere with spherical coordinates 
\[
\left(t , \frac{j\pi}{2} \right), \quad j=0,1,2,3.
\]
Let $\lambda=\lambda(\theta)>1$ be a parameter that will later be fixed close to $1$, define 
\[
T(z)=\frac{\lambda z+\lambda^{-1}z^{-1}}{\sqrt{\lambda^2-\lambda^{-2}}} , 
\]
and consider the domain $\om=\R^2\setminus T(\R^2\setminus D)$, where $D$ is the unit disk in $\R^2 \simeq {\mathbb C}$. In other words $\om$ is an ellipse of area $\pi$ whose horizontal and vertical semi-axes are respectively
\[
\sqrt{\frac{\lambda+\lambda^{-1}}{\lambda-\lambda^{-1}}} \qquad \text{and} \qquad \sqrt{\frac{\lambda-\lambda^{-1}}{\lambda+\lambda^{-1}}} .
\]
Now take a small number $\eps>0$, let $\psi_j:\R^2\to \Sd$ be the stereographic projection such that $\psi_j(0)=x_j$ and $d\psi(0)e_2$ is a unit vector pointing upward (in the direction of the line of longitude) in the tangent plane to the sphere, and let $\Om=B_\theta$ and 
\[
\Om^\eps:=B_{\theta(\eps)}\setminus\bigcup_{j=0}^{3}\psi_j(\eps \om),
\]
where $\theta(\eps)$ is chosen such that $|\Om^\eps|=|B_{\theta(\eps)}|$: this means that $\theta(\eps)=\theta+\frac{4\pi \eps^2}{|\partial B_\theta|}+o(\eps^2)$. Thus $\Om^\eps$ is a spherical cap with four (approximately) elliptical holes removed near aperture $t$, with the longer axis of each hole being (approximately) horizontal. 

Let us now compute, for a fixed $\lambda$, the asymptotic of $\mu_1(\Om^\eps)$ as $\eps\to 0$. We denote by 
\begin{equation}\label{eq_descriptionu1u2}
u_1(\cdot,\phi)=  g(\cdot) \cos \phi \qquad \text{and} \qquad u_2(\cdot,\phi)=   g(\cdot) \sin \phi
\end{equation}
the eigenfunctions corresponding to the eigenvalue $\mu=\mu_1(B_\theta)=\mu_2(B_\theta)$, where $g$ is normalized such that $u_1,u_2$ are orthonormal in $L^2(B_\theta)$. Since $\Om$ is invariant by rotation of the sphere by 90 degrees around the vertical axis and the eigenfunctions associated to $\mu_1(\Om^\eps)$ and $\mu_2(\Om^\eps)$ are close (in the $L^2$ sense) to linear combinations of $u_1,u_2$ that are not invariant under the rotation, it follows that $\mu_1(\Om^\eps)=\mu_2(\Om^\eps)$.

Proposition \ref{Prop_limholes} with $N=4$ and $I=\{1,2\}$ gives the convergence of $\mu_1(\Om^\eps)$ to $\mu_1(B_\theta)$, with 
\[
\begin{split}
\mu_1(\Om^\eps)-\mu_1(B_\theta)
=\frac{1}{2}\sum_{i=1}^{2}\left[\mu_i(\Om^\eps)-\mu\right] 
& = \frac{\eps^2}{2} (\kappa_1+\kappa_2)+o(\eps^2)\\
& = \frac{\eps^2}{2} \big( \text{trace of the matrix in Proposition \ref{Prop_limholes}} \big) +o(\eps^2) .
\end{split}
\]
Evaluating the diagonal entries of the matrix in Proposition \ref{Prop_limholes} gives that 
\begin{equation}\label{est_mu1}
\begin{split}
& \mu_1(\Om^\eps)-\mu_1(B_\theta) \\
&=\frac{\eps^2}{2}\sum_{i=1}^{2}\sum_{j=0}^{3}\left[\mu_i(B_\theta)|\om||u_i(x_j)|^2- \langle M_\om  \psi_j^*\nabla u_i(x_j), \psi_j^*\nabla u_i(x_j)\rangle\right]\\
& \qquad + \frac{1}{2}\frac{4\pi \eps^2}{|\partial B_\theta|}\sum_{i=1}^{2}\int_{\partial B_\theta}\left(|\nabla u_i|^2-\mu_i(B_\theta)|u_i|^2\right)+o(\eps^2)\\
&=2\pi\eps^2\left[\mu_1(B_\theta)g(t)^2-\frac{(M_\om)_{1,1}}{\pi}\frac{g(t)^2}{\sin^2 t }-\frac{(M_\om)_{2,2}}{\pi}g'(t)^2+\left(\frac{1}{\sin^2 \theta}-\mu_1(B_\theta)\right) g(\theta)^2 \right] +o(\eps^2) ,
\end{split}
\end{equation}
as we now explain in detail. Based on the form of $u_1$ and $u_2$ in equation \eqref{eq_descriptionu1u2}, one has $u_1(\theta,\phi)^2+u_2(\theta,\phi)^2=g(\theta)^2$ and 
\[
|\nabla u_1(\theta,\phi)|^2+|\nabla u_2(\theta,\phi)|^2=g'(\theta)^2+\frac{g(\theta)^2}{\sin(\theta)^2}=\frac{g(\theta)^2}{\sin(\theta)^2}
\]
since $g'(\theta)=0$. For fixed $j$, the values taken by $u_1 \circ \psi_j$ and $u_2 \circ \psi_j$ at the origin are $\{\pm g(t),0\}$ and the gradients are $\{\pm g'(t)e_2,\pm \frac{g(t)}{\sin(t)}e_1\}$. Here each $\pm$ sign depends on $j$, but fortunately, the choice of sign makes no difference to the following calculation. Hence 
\[
\begin{split}
& \sum_{i=1}^{2}\left[\mu_i(B_\theta)|\om||u_i(x_j)|^2- \langle M_\om  \psi_j^*\nabla u_i(x_j), \psi_j^*\nabla u_i(x_j)\rangle\right] \\
& = \mu_1(B_\theta)|\om|g(t)^2-(M_\om)_{1,1} \, \frac{g(t)^2}{\sin^2 t }-(M_\om)_{2,2} \, g'(t)^2 .
\end{split}
\]
Summing over $j=0,1,2,3$ and recalling that $|\om|=\pi$ yields the final line of \eqref{est_mu1}. 

The third and fourth terms in \eqref{est_mu1} vanish because our choice $t=t_{max}(\theta)$ gives $g^\prime(t)=0$. For the fifth term, since $\theta>\Theta$ we have $\frac{1}{\sin^2 \theta}-\mu_1(B_\theta)>0$. For the second term, we now  compute $(M_\om)_{1,1}$ using formula \eqref{eq_computation_M}. In this case 
\begin{align*}
\cp[\om]&=\frac{\lambda}{\sqrt{\lambda^2-\lambda^{-2}}} , \qquad c_1[\om]=\lambda^{-2}, \\
(M_\om)_{1,1}&=\frac{2\pi \lambda^2}{\lambda^2-\lambda^{-2}}\left(1-\frac{1}{\lambda^2}\right)=\frac{2\pi}{1+\lambda^{-2}} .
\end{align*}

Coming back to \eqref{est_mu1}, we obtain
\begin{align*}
\mu_1(\Om^\eps)-\mu_1(B_\theta)&\geq 2\pi\eps^2\left[\mu_1(B_\theta)g(t)^2-\frac{1}{\pi}\frac{2\pi}{1+\lambda^{-2}}\frac{g(t)^2}{\sin^2 t }\right]+o(\eps^2)\\
&=2\pi\eps^2\frac{g(t)^2}{\sin(t)^2}\left[\mu_1(B_\theta)\sin^2 t -\frac{2}{1+\lambda^{-2}}\right]+o(\eps^2) .
\end{align*}

Our choice of $t$ ensures that $\mu_1(B_\theta)=\mu_1(B_t)$, and  since $t<\Theta$ we know  that $\mu_1(B_t)\sin^2 t >1$. Thus we may choose $\lambda$ close enough to $1$ that $\mu_1(B_\theta)\sin^2 t > \frac{2}{1+\lambda^{-2}}$. Notice this choice of $\lambda$ depends only on $\theta$. Hence for all small enough $\eps$, we have $\mu_1(\Om^\eps)>\mu_1(B_\theta)$.   This concludes the proof of the theorem in the case $\theta>\Theta$.

\begin{remark} \rm
If we sought instead counterexamples to the maximality of the geodesic ball for the arithmetic mean $(\mu_1+\mu_2)/2$ or  harmonic mean $2/\big( \frac{1}{\mu_1}+\frac{1}{\mu_2} \big)$, then it would suffice to employ a single elliptical hole instead of four holes as used above. Piercing by multiple holes is done simply to ensure the double multiplicity of $\mu_1$. Thus while the harmonic mean is maximal at a geodesic ball among simply connected domains of fixed area smaller than $0.94|\Sd|$, by \cite{LL22}, for each area larger than $|B_{\Theta}|(\approx 0.8|\Sd|)$ a doubly connected domain exists whose harmonic mean of $\mu_1$ and $\mu_2$ is larger than for the geodesic ball of the same area. \qed
\end{remark}

The remaining case in the theorem, for $\theta\in (\Theta-\delta,\Theta]$, follows from the next proposition. Recall that $\om=\om_\lambda$ is an ellipse of area $\pi$. As above, $\Om^\eps=B_{\theta(\eps)}\setminus\bigcup_{j=0}^{3}\psi_j\left(\eps \om\right)$ is a cap with four holes such that $|\Om^\eps|=|B_{\theta}|$.
\begin{proposition} \label{pr:nearTheta}
There exists $\delta>0, \lambda>1$ and $t < \Theta-\delta$ such that if $\theta \in (\Theta-\delta,\Theta]$ then $\mu_1(\Om^\eps)>\mu_1(B_{\theta(\eps)})$ for all small $\eps > 0$.
\end{proposition}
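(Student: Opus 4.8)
\textbf{Proof proposal for Proposition \ref{pr:nearTheta}.}

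The plan is to revisit the computation \eqref{est_mu1} but now track the sign of the full bracket, rather than discarding the boundary term. When $\theta=\Theta$ exactly, the key quantities satisfy $g'(\Theta)=0$ would require $t_{max}(\Theta)=\Theta$, which is not the case since $t_{max}(\theta)\in(\pi/2,\Theta)$ stays strictly below $\Theta$ as $\theta\downarrow\Theta$; rather, for $\theta$ slightly below $\Theta$ the cap $B_\theta$ is in case (a), so $g'>0$ on $(0,\theta)$ and there is no interior critical point. So I would first fix a single value $t<\Theta$ at which to place the holes — for instance a value slightly less than $\Theta$ — and keep this $t$ \emph{fixed} as $\theta$ ranges over $(\Theta-\delta,\Theta]$ with $\delta$ small. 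The point is that the key inequality $\mu_1(B_t)\sin^2 t>1$ holds for every $t<\Theta$ by \cite[Propositions 3.1 and 4.2]{LL22}, so it persists at this fixed $t$.

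The technical subtlety is that for $\theta\le\Theta$, the radial eigenfunction $g=g_\theta$ of $B_\theta$ does not have $g'(t)=0$ at our chosen $t$, so both the $(M_\om)_{2,2}g'(t)^2$ term and the boundary term $\big(\frac{1}{\sin^2\theta}-\mu_1(B_\theta)\big)g(\theta)^2$ in \eqref{est_mu1} are now present and must be bounded. The boundary term is nonpositive (or zero) since $\theta\le\Theta$ implies $\mu_1(B_\theta)\sin^2\theta\ge 1$; this works \emph{against} us. The second key step is therefore an estimate showing that as $\theta\uparrow\Theta$ (equivalently $\theta$ near $\Theta$), the ``bad'' terms $-\frac{(M_\om)_{2,2}}{\pi}g_\theta'(t)^2$ and $\big(\frac{1}{\sin^2\theta}-\mu_1(B_\theta)\big)g_\theta(\theta)^2$ are small compared with the ``good'' gain $\frac{g_\theta(t)^2}{\sin^2 t}\big(\mu_1(B_\theta)\sin^2 t-\frac{2}{1+\lambda^{-2}}\big)$. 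By continuity of $\theta\mapsto\mu_1(B_\theta)$ and of the eigenfunctions $g_\theta$ (in $C^1$ on compact subsets of $(0,\theta)$, say), as $\theta\to\Theta$ we have $\mu_1(B_\theta)\to\mu_1(B_\Theta)$, $g_\theta\to g_\Theta$, and in particular $g_\theta'(t)\to g_\Theta'(t)$, $g_\theta(\theta)\to g_\Theta(\Theta)$, all with finite limits. At $\theta=\Theta$ itself the boundary coefficient $\frac{1}{\sin^2\Theta}-\mu_1(B_\Theta)$ is \emph{zero} by the very definition of $\Theta$, so the boundary term vanishes in the limit. Thus the limiting value of the bracket as $\theta\to\Theta$ is
\[
\mu_1(B_\Theta)g_\Theta(t)^2-\frac{(M_\om)_{1,1}}{\pi}\frac{g_\Theta(t)^2}{\sin^2 t}-\frac{(M_\om)_{2,2}}{\pi}g_\Theta'(t)^2 ,
\]
and I must choose $\lambda>1$ (close to $1$) and $t<\Theta$ so that this is strictly positive. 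Using $(M_\om)_{1,1}=\frac{2\pi}{1+\lambda^{-2}}$ and $(M_\om)_{2,2}=\mathrm{Tr}(M_\om)-(M_\om)_{1,1}=4\pi\cp[\om]^2-\frac{2\pi}{1+\lambda^{-2}}$, one checks that as $\lambda\to 1^+$ we have $(M_\om)_{1,1}\to\pi$, $(M_\om)_{2,2}\to\pi$, and $\cp[\om]\to\infty$ — wait, more carefully: $\cp[\om]=\lambda/\sqrt{\lambda^2-\lambda^{-2}}\to\infty$ as $\lambda\to 1$, so $(M_\om)_{2,2}\to\infty$. This is a problem if $g_\Theta'(t)\ne 0$. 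Hence I would instead \emph{first} choose $t$ close enough to $t_{max}$-type behavior — actually, simply choose $t$ so that $\mu_1(B_\Theta)\sin^2 t-1$ is strictly positive (automatic for $t<\Theta$), fix $\lambda$ close to $1$ so that $\mu_1(B_\Theta)\sin^2 t>\frac{2}{1+\lambda^{-2}}$, obtaining a fixed positive gain $2\pi\frac{g_\Theta(t)^2}{\sin^2 t}(\mu_1(B_\Theta)\sin^2 t-\frac{2}{1+\lambda^{-2}})>0$; then the remaining term $-\frac{(M_\om)_{2,2}}{\pi}g_\Theta'(t)^2$ — with $\lambda$ now fixed so $(M_\om)_{2,2}$ is a fixed finite constant — is made subordinate by choosing $t$ even closer to $0$? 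No: $g_\Theta'(t)$ need not be small for $t$ near $0$. The correct fix is: since $g_\Theta$ is fixed once $\Theta$ is fixed, the single inequality to arrange is $\mu_1(B_\Theta)g_\Theta(t)^2 \sin^{-2} t > \frac{(M_\om)_{1,1}}{\pi}\frac{g_\Theta(t)^2}{\sin^4 t}+\ldots$ — I will simply choose $t$ at (or near) the value maximizing $g_\Theta$, i.e. $t_{max}(\Theta)$, where $g_\Theta'(t_{max}(\Theta))=0$, so the bad gradient term vanishes; note $t_{max}(\Theta)<\Theta$, so the required $t<\Theta-\delta$ can be met.

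So the refined plan is: take $t=t_{max}(\Theta)\in(\pi/2,\Theta)$, which is strictly less than $\Theta$, hence $t<\Theta-\delta$ for $\delta$ small; choose $\lambda>1$ close to $1$ so that $\mu_1(B_\Theta)\sin^2 t>\frac{2}{1+\lambda^{-2}}$, which is possible since $\mu_1(B_\Theta)\sin^2 t>1$ (as $t<\Theta$). At $\theta=\Theta$ the limiting bracket equals $2\pi\big(\mu_1(B_\Theta)g_\Theta(t)^2-\frac{(M_\om)_{1,1}}{\pi}\frac{g_\Theta(t)^2}{\sin^2 t}\big) = 2\pi\frac{g_\Theta(t)^2}{\sin^2 t}\big(\mu_1(B_\Theta)\sin^2 t-\frac{2}{1+\lambda^{-2}}\big)>0$, since $g_\Theta'(t)=0$ and the boundary coefficient vanishes. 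Then by continuity of all relevant quantities in $\theta$ — namely $\mu_1(B_\theta)$, $g_\theta(t)$, $g_\theta'(t)$, $g_\theta(\theta)$, and $\frac{1}{\sin^2\theta}-\mu_1(B_\theta)$, which tend to their values at $\theta=\Theta$ as $\theta\to\Theta$ — there is $\delta>0$ such that for all $\theta\in(\Theta-\delta,\Theta]$ the bracket in \eqref{est_mu1}, for this same fixed $t$ and $\lambda$, remains strictly positive. Applying Proposition \ref{Prop_limholes} exactly as in the case $\theta>\Theta$ (with $N=4$, $I=\{1,2\}$, the same symmetry argument forcing $\mu_1(\Om^\eps)=\mu_2(\Om^\eps)$), we conclude that $\mu_1(\Om^\eps)-\mu_1(B_{\theta(\eps)})=2\pi\eps^2\cdot(\text{positive bracket})+o(\eps^2)>0$ for all small $\eps>0$. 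The main obstacle is establishing the requisite continuity/uniformity of the eigenfunctions $g_\theta$ (and of $t_{max}$) as $\theta$ varies near $\Theta$, so that the strict positivity at $\theta=\Theta$ propagates to a neighborhood; this is a standard perturbation-of-ODE argument for \eqref{gequation} with the Neumann condition $g_\theta'(\theta)=0$, but it needs to be stated carefully, and one must also ensure the error term $o(\eps^2)$ from Proposition \ref{Prop_limholes} is uniform in $\theta$ over the compact range $[\Theta-\delta,\Theta]$.
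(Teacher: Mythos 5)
Your refined plan hinges on choosing $t=t_{max}(\Theta)$ strictly inside $(0,\Theta)$ so that $g_\Theta'(t)=0$ and the bad term $-\frac{(M_\om)_{2,2}}{\pi}g'(t)^2$ vanishes. But no such interior critical point exists: Proposition \ref{basicsphere}(a) says that for $\theta\le\Theta$ the radial profile $g_\theta$ has $g_\theta'>0$ on $(0,\theta)$, with $g_\theta'$ vanishing only at the boundary $t=\theta$. In fact you noted exactly this earlier in your own write-up (``for $\theta$ slightly below $\Theta$ the cap $B_\theta$ is in case (a), so ... there is no interior critical point''), and $\theta=\Theta$ still falls in case (a). So the ``value maximizing $g_\Theta$'' on $[0,\Theta]$ is $t=\Theta$ itself, and $t_{max}(\Theta)<\Theta$ is simply not defined. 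Your candidate for the limiting bracket is thus never the actual limiting bracket for any admissible $t<\Theta$.

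Worse, the tension you flagged and set aside is precisely the heart of the matter: to kill $g_\Theta'(t)^2$ you need $t$ near $\Theta$, but then $\mu_1(B_\Theta)\sin^2 t$ tends to $1$, so making the good term positive forces $\lambda\to 1$, which sends $(M_\om)_{2,2}\sim 1/(\lambda-1)$ to infinity. The paper resolves this by a coupled two-parameter asymptotic: set $\lambda=1+\eta$ and $t=\Theta-\tau$ and, crucially, exploit the degeneracy $g_\Theta'(\Theta)=g_\Theta''(\Theta)=0$ (the second equality follows from equation \eqref{gequation} combined with $\mu_1(B_\Theta)\sin^2\Theta=1$), which gives $g_\Theta'(\Theta-\tau)=O(\tau^2)$ rather than $O(\tau)$. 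The bad term is then $O(\tau^4/\eta)$, and the good term is of order $-2\tau\cot\Theta-\eta$; choosing $\eta=-\tau\cot\Theta>0$ makes the leading contribution $\eta\,g(\Theta)^2/\sin^2\Theta>0$ while the error is $O(\eta^2+\tau^2+\tau^4/\eta)=O(\eta^2)$. Without using $g''(\Theta)=0$, the bad term would only be $O(\tau^2/\eta)$, which cannot be dominated, so this second-order vanishing is an essential ingredient your proposal misses. Your outer continuity argument (perturbing $\theta$ near $\Theta$ once a strictly positive bracket is secured at $\theta=\Theta$) is correct and agrees with the paper, but the core positivity at $\theta=\Theta$ is unproven.
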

\begin{proof} 
Let $t \in(0,\Theta)$ and compute the asymptotic of $\mu_1(\Om^\eps)-\mu_1(B_{\theta})$ as follows. With the same computation as previously, applying Proposition \ref{Prop_limholes} to $\Om^\eps$ gives, following equation \eqref{est_mu1}: 
\begin{align*}
& \mu_1(\Om^\eps)-\mu_1(B_{\theta}) \\
& = 2\pi\eps^2\left(\mu_1(B_\theta)g(t)^2-\frac{(M_{\om_\lambda})_{1,1}}{\pi}\frac{g(t)^2}{\sin^2 t }-\frac{(M_{\om_\lambda})_{2,2}}{\pi}g'(t)^2 + \left( 1 - \mu_1(B_\theta)\sin^2 \theta\right) \frac{g(\theta)^2}{\sin^2 \theta} \right)+o(\eps^2) .
\end{align*}
The factor in parentheses multiplying $2\pi\eps^2$ is continuous with respect to $\theta,t,\lambda$, using here that the radial part $g$ of the eigenfunction on $B_\theta$ varies continuously with the radius $\theta$ of that ball, and so to finish the proof of the theorem it is enough to prove that the factor is positive at $\theta=\Theta$ for some suitable choice of $t$ and $\lambda$; then $\delta$ can be chosen small enough to ensure that the factor in parentheses remains positive when $\theta \in (\Theta-\delta,\Theta]$. When $\theta=\Theta$ we have $\mu_1(B_\Theta)\sin^2(\Theta)=1$ and so the task simplifies to finding $t$ and $\lambda$ such that
\begin{equation}\label{eq_derivativeTheta}
\mu_1(B_\Theta)g(t)^2-\frac{(M_{\om_\lambda})_{1,1}}{\pi}\frac{g(t)^2}{\sin^2 t }-\frac{(M_{\om_\lambda})_{2,2}}{\pi}g'(t)^2 > 0 ,
\end{equation}
where $g$ is the radial part of the eigenfunction for $B_\Theta$, with $\theta=\Theta$. 

Let $\lambda=1+\eta$ for some small $\eta>0$ to be fixed later. Using   formula \eqref{eq_computation_M} as previously, we have
\[\frac{(M_{\om_\lambda})_{1,1}}{\pi}=\frac{2}{1+\lambda^{-2}}= 1+\eta+\mathcal{O}(\eta^2) 
\]
and similarly
\[\frac{(M_{\om_\lambda})_{2,2}}{\pi}=\frac{2}{1-\lambda^{-2}}= \frac{1}{\eta}+\mathcal{O}(1) .
\]

Note also that $g'(\Theta)=0$ by the Neumann boundary condition (recall we have chosen $\theta=\Theta$), while the equation \eqref{gequation} satisfied by $g$ implies $g''(\Theta)=0$ (a special property at the radius $\Theta$, where $\mu_1(B_\Theta)\sin^2(\Theta)=1$) and $g^{\prime\prime\prime}(\Theta)=-2(\cos \Theta /\sin^3 \Theta) g(\Theta)>0$. Note $\cos \Theta < 0$ since $\Theta$ is larger than $\pi/2$. Consequently, writing $t=\Theta-\tau$ where $\tau>0$ is to be determined, we find 
\begin{align*}
g(\Theta-\tau) &=g(\Theta)+\frac{1}{3}\frac{\cos \Theta}{\sin^3 \Theta}g(\Theta)\tau^3+\mathcal{O}(\tau^4) , \\
g'(\Theta-\tau) &=-\frac{\cos \Theta}{\sin^3 \Theta}g(\Theta)\tau^2+\mathcal{O}(\tau^3) , \\
\frac{g(\Theta-\tau)^2}{\sin^2(\Theta-\tau)} & =
\frac{g(\Theta)^2}{\sin^2 \Theta} \left( 1 + 2 \tau \cot \Theta +\mathcal{O}(\tau^2) \right) .
\end{align*}
Hence the left side of \eqref{eq_derivativeTheta} equals
\begin{align*}
\mu_1(B_\Theta) & g(\Theta-\tau)^2-\frac{(M_{\om_{1+\eta}})_{1,1}}{\pi}\frac{g(\Theta-\tau)^2}{\sin^2(\Theta-\tau)} -\frac{(M_{\om_{1+\eta}})_{2,2}}{\pi}g'(\Theta-\tau)^2\\
&=\mu_1(B_\Theta)(g(\Theta)^2+\mathcal{O}(\tau^3))-(1+\eta+\mathcal{O}(\eta^2))\frac{g(\Theta)^2}{\sin^2 \Theta} \left( 1 + 2 \tau \cot \Theta +\mathcal{O}(\tau^2) \right) \\
&\qquad \qquad \qquad -(\eta^{-1}+\mathcal{O}(1))\left(-\frac{\cos \Theta}{\sin^3 \Theta}g(\Theta)\tau^2+\mathcal{O}(\tau^3)\right)^{\!\!2}\\
&=\left( -2\tau\cot \Theta-\eta \right) \frac{g(\Theta)^2}{\sin^2 \Theta} +\mathcal{O}\left(\tau^2+\eta^2+\frac{\tau^4}{\eta} \right) 
\end{align*}
by substituting $\mu_1(B_\Theta)=1/\sin^2 \Theta$ and simplifying. We now specify $\tau$ in terms of $\eta$ by requiring $\eta=-\tau \cot \Theta>0$.  Then \eqref{eq_derivativeTheta} holds for any small enough choice of $\eta$, because its left side equals 
\begin{align*}
\eta \, \frac{g(\Theta)^2}{\sin^2 \Theta} + \mathcal{O}(\eta^2) > 0 .
\end{align*}
This inequality finishes the proof of Proposition \ref{pr:nearTheta}. 
\end{proof}

\section{Numerical and heuristic analysis of a new class of domains}\label{sec_num}

 In order to understand how low the value of $\Theta-\delta$ in Theorem \ref{main_th} could be, in this section we are interested in a different kind of geometric perturbation of a spherical cap. This kind of counterexample is loosely inspired by the numerical optima found by Martinet \cite{M23}. 
The arguments we bring in this section are not rigorous enough to produce a fully mathematically justified lower value for  $\Theta-\delta$. However, our analysis is supported by numerical computations, gives a good hint of a new class of competitive geometries, and could possibly be made rigorous with extra effort. The sets we deal with consist  of the union of a spherical cap (geodesic ball) and a graph-like domain. 
For a geodesic cap $B_\theta$ having its center at $e_3$, define the domain 
\[
\Om_\e(\theta) = B_{\theta^\e} \cup \{ |x_1| < \e \} \cup \{ |x_2| < \e \}
\]
to be the union of a cap and two strips, where the aperture $\theta^\e < \theta$ is chosen such that $|\Om_\e(\theta)|=|B_\theta|$. Such domains, for several values of $\theta$ and $\e$, are depicted Figure ~\ref{fig:sphere_strips}. We call them ``helmet'' domains, although if they were turned upside down they could also be called ``basket'' domains.

\begin{figure}[!htp]
    \centering
    \includegraphics[width=0.26\textwidth]{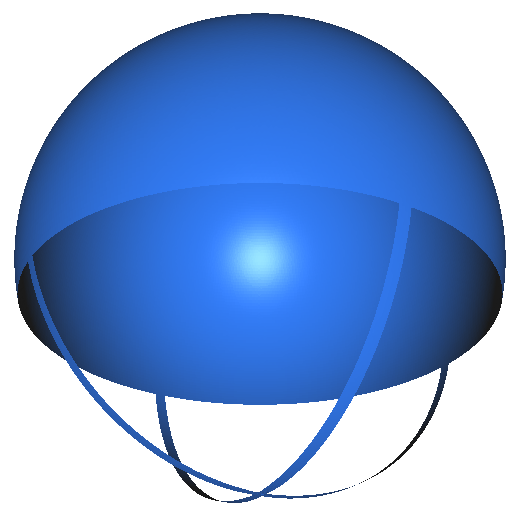}
    \includegraphics[width=0.26\textwidth]{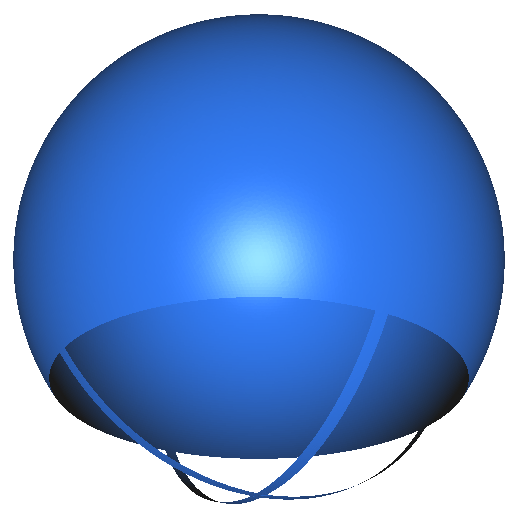}
    \includegraphics[width=0.26\textwidth]{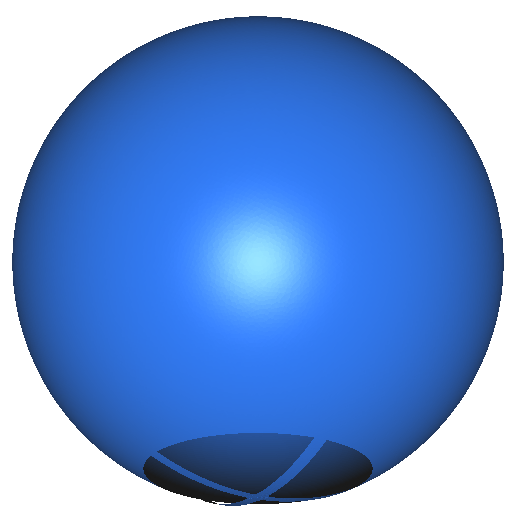}
    \includegraphics[width=0.26\textwidth]{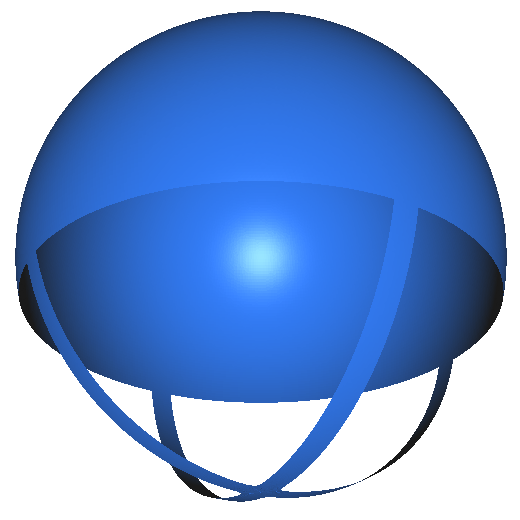}
    \includegraphics[width=0.26\textwidth]{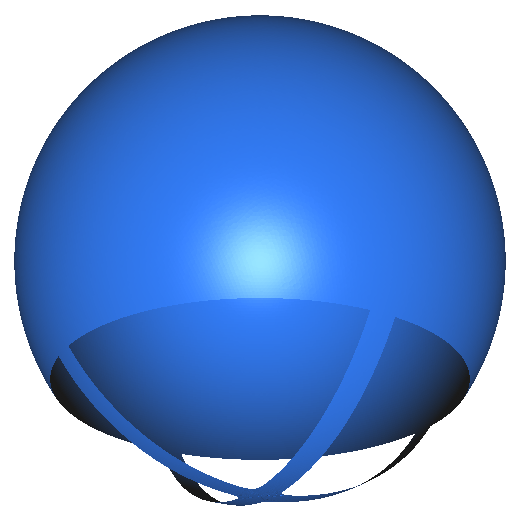}
    \includegraphics[width=0.26\textwidth]{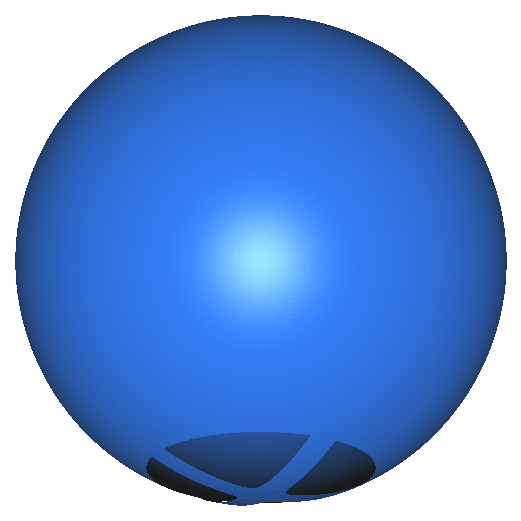}
    \caption{Sets $\Omega_\e(\theta)$ for several values of the aperture $\theta$ and for strip thickness $\e=0.05$ (top) and $\e=0.1$ (bottom). Here $e_3$ is oriented vertically.}
    \label{fig:sphere_strips}
\end{figure}

Using the finite element software FreeFem++ \cite{FF}, we can generate meshes on such domains and compute their eigenvalues. Figure ~\ref{fig:comparison_eigenvalues} plots the value of $\mu_1(\Omega_\e(\theta))$ as a function of $\theta$, for different values of $\e>0$, each of which provides counterexamples to maximality of the cap ($\e=0$) over slightly different intervals of $\theta$. 
\begin{figure}[!htp]
    \centering
    \includegraphics[width=\textwidth]{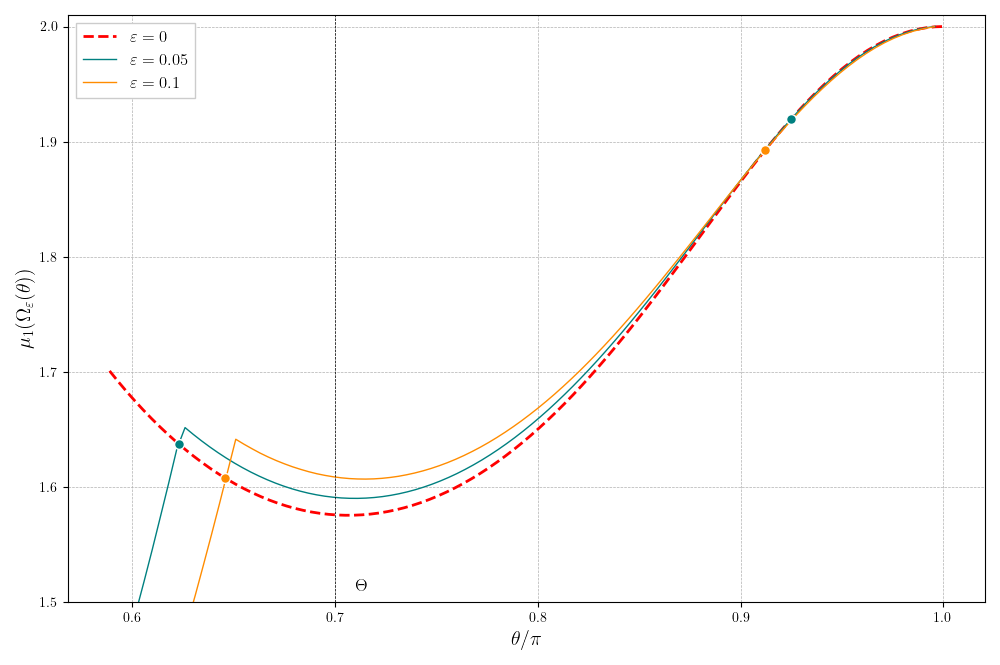}
    \caption{Value of $\mu_1(\Omega_\e(\theta))$ as a function of $\theta/\pi$, for $\e=0.05$ and $\e=0.1$. The red curve $\e=0$ corresponds to the spherical cap $B_\theta$. These graphs provide numerical counterexamples to maximality of $\mu_1$ at the cap, for cap apertures $\theta \in (0.623\pi, 0.925\pi)$ for $\e=0.05$ and $\theta \in (0.646\pi, 0.912\pi)$ for $\e=0.1$. The vertical dotted line is at $\Theta \simeq 0.7\pi$; recall that Theorem \ref{main_th} provides rigorous counterexamples to the right of that aperture.}
    \label{fig:comparison_eigenvalues}
\end{figure}

This figure deserves several comments. Most importantly, the helmet counterexamples work for certain aperture values $\theta$ that are substantially smaller than $\Theta$, which Theorem \ref{main_th} cannot do due to the very nature of the earlier construction. On the other hand, for a fixed $\e$ the helmet domains fail to provide counterexamples when $\theta$ is close to $\pi$, whereas Theorem \ref{main_th} holds all the way up to $\pi$: in other words, as $\e$ gets small we obtain two new critical angles $\Theta^\prime,\Theta^{\prime\prime}$ with
\[\frac{\pi}{2}<\Theta^\prime<\Theta<\Theta^{\prime\prime} \leq \pi\]
such that a helmet domain gives a counterexample for $\theta\in (\Theta^\prime,\Theta^{\prime\prime})$.\\

Next, the two eigenvalue curves in the figure exhibit downward-pointing corners around the aperture $\Theta^\prime (< \Theta)$, leading the current constructions to stop being counterexamples as the aperture gets smaller. It is shown in Figure \ref{fig:eigenfunctions} that this corner is linked to the change of nature of the eigenfunction associated to $\mu_1$ (with the eigenfunction transitioning to one that is concentrated on the strips) and is a crossing point for which we have $\mu_1=\mu_2=\mu_3$. Lastly, it seems that decreasing the width $\e$ of the strip allows one to get counterexamples for smaller values of $\theta$. A question of interest would be to provide the precise asymptotic behaviour of $\mu_1(\Omega_\e(\theta))$ when $\e$ goes to $0$, with the purpose to determine the smallest $\theta$ that can yield a counterexample of this type.

\begin{figure}[!htp]
  \centering
  \includegraphics[width=0.3\textwidth]{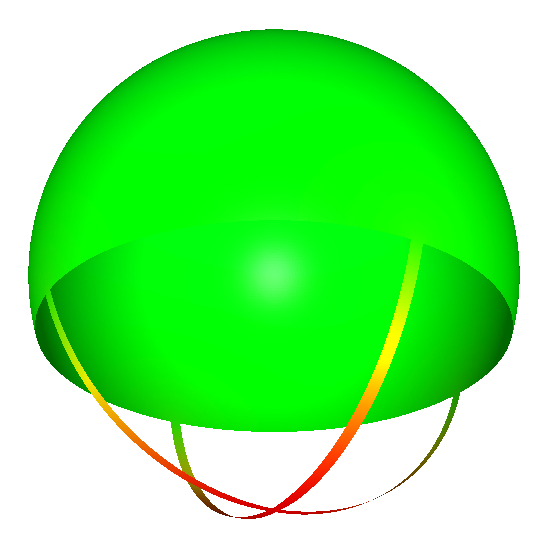}
  \includegraphics[width=0.3\textwidth]{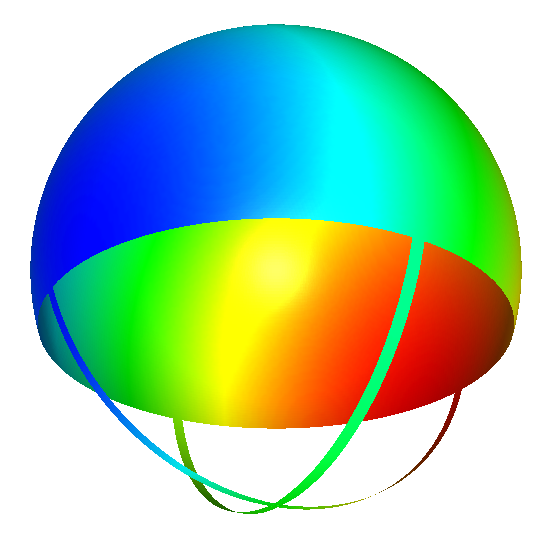}
  \caption{Eigenfunctions associated to $\mu_1$ for $\e=0.05$, for $\theta = 1.96$ (left) and $\theta=1.97$ (right). The red color corresponds to the value $1$ while deep blue corresponds to $-1$.}
  \label{fig:eigenfunctions}
\end{figure}

For replicability purposes, the examples used to generate Figure \ref{fig:comparison_eigenvalues} are available at \url{https://github.com/EloiMartinet/PuncturedSpheres}.\bigbreak

Let us now give a heuristic (and non-rigorous) justification of these counterexamples,   based on the results of Arrieta \cite{Ar95} proved in the case of simple eigenvalues in ``dumbbell'' domains. To reduce the number of leaps of faith, let us proceed slightly differently to above by no longer attempting to preserve the original area: simply add the straps to the cap $B_\theta$ by letting
\[
H_{\theta,\eps}=B_\theta\cup\{x:|x_1|\leq \eps\text{ or }|x_2|\leq \eps\}
\]
where $H$ stands for ``Helmet'', and compare with the cap 
\[B_{\theta(\eps)} \quad \text{ where }\theta(\eps)\text{ is chosen to satisfy }|B_{\theta(\eps)}|=|H_{\theta,\eps}|.\]

Let us first compute an asymptotic for $\mu_1(B_{\theta(\eps)})$, with a shape derivative. Since $|H_{\theta,\eps}|=|B_\theta|+8(\pi-\theta)\eps+o(\eps)$, we have $\theta(\eps)=\theta+ \frac{8(\pi-\theta)\eps}{|\partial B_\theta|}+o(\eps)$. As a consequence,
\begin{align*}
\mu_1(B_{\theta(\eps)})&=\mu_1(B_\theta)+\frac{8(\pi-\theta)\eps}{|\partial B_\theta|}\int_{\partial B_\theta}\left(|\nabla u_1|^2-\mu_1(B_\theta)u_1^2\right)\\
&=\mu_1(B_\theta)+4(\pi-\theta)\eps
\left(\frac{1}{\sin^2 \theta}-\mu_1(B_\theta)\right)g(\theta)^2\\
\end{align*}
where $u_1$ is an eigenfunction associated to $\mu_1(B_\theta)$, of the form $g(t) \sin \phi$ or $g(t) \cos \phi$. \bigbreak

Let us now compute an asymptotic for $\mu_1(H_{\theta,\eps})$: following Arrieta \cite[Theorem 2.5]{Ar95} we expect the Neumann spectrum of $H_{\theta,\eps}$ to be given by the union (counting multiplicities) of the spectra $(\nu_i^\e)_{i\geq 1}$ and $(\mu_i^\e)_{i\geq 0}$, where

\begin{itemize}
\item $\nu_i^\e$ corresponds to an eigenfunction for which the $L^2$ norm is concentrated on the ``helmet straps'' $H_{\theta,\eps} \setminus B_{\theta}$. One expects that $\nu_i^\e=\nu_i+o(1)$ as $\e\to 0$, where the $(\nu_i)_{i\geq 1}$ are the solutions to the 1-dimensional Dirichlet eigenvalue problem on the graph 
\[
G:=(\{x_1=0\}\cup \{x_2=0\})\setminus B_\theta
\]
on the sphere, with boundary $\partial G:=(\{x_1=0\}\cup \{x_2=0\})\cap \partial B_\theta$, and where the eigenvalue problem is 
\[\begin{cases}
-\Delta_G w_i=\nu_i w_i&\text{ in }G, \\
w_i=0 & \text{ on }\partial G ,
\end{cases} \]
and the first equation is understood as $-\partial_\tau^2 w_i=\nu_i w_i$ along the edges of the graph, with a Kirchhoff condition at the (only) vertex. Explicitly, one can show the first eigenvalue is 
\[
\nu_1= \frac{\pi^2}{4(\pi-\theta)^2}
\]
associated to the eigenfunction 
\[
w_1\left(\pi+\tau,\frac{k\pi}{2}\right)=\cos\left(\frac{\pi \tau}{2(\pi-\theta)}\right) , \qquad \tau\in [-(\pi-\theta),\pi-\theta] , \quad k=0,1 .
\]

In particular $\nu_1>\mu_1(B_\theta)$ if and only if $(\pi-\theta)\sqrt{\mu_1(B_\theta)}<\frac{\pi}{2}$. Letting $\Theta^\prime$ be the value for which $(\pi-\theta)\sqrt{\mu_1(B_\theta)}=\frac{\pi}{2}$, it may be checked that $\Theta^\prime \simeq 0.61\pi$ and that the condition $(\pi-\theta)\sqrt{\mu_1(B_\theta)}<\frac{\pi}{2}$ holds for all $\theta>\Theta^\prime$. Accordingly, from now on we suppose $\theta \in (\Theta^\prime,\pi)$.

\smallskip
\item   $\mu_i^\e$ corresponds to a perturbation of $\mu_i(B_\theta)$, and is associated to an eigenfunction $u_i^\e$ of $H_{\theta,\eps}$ that is an $L^2$-perturbation of an eigenfunction associated to $\mu_i(B_\theta)$, denoted $u_i$. In this case an asymptotic of $\mu_i^\eps$ is 
\[\mu_i^\eps=\mu_i(B_\theta)+\eps\int_{G}\left(|\nabla_G v_i|^2-\mu_i(B_\theta)|v_i|^2\right)+o(\eps),\]
where $v_i$ is defined as the solution to
\[
\begin{cases}
-\Delta_G v_i=\mu_i(B_\theta) v_i&\text{ in }G ,\\
v_i=u_i & \text{ on }\partial G ,
\end{cases} 
\]
with again a Kirchhoff condition at the vertex. This $v_i$ exists and is unique only when $\mu_i(\theta)\neq \nu_j$ for all $j$, which is true for $i=1$ when we suppose $\theta>\Theta^\prime$. 

\end{itemize}

Under the hypothesis that $\theta>\Theta'$, we know that for a small enough $\eps$ the first eigenvalue of $H_{\theta,\eps}$ is of the form $\mu_1^\eps$ as described above. For $i=1$, the Neumann eigenvalue $\mu_1(B_\theta)$ is associated to the eigenfunctions
\[
u_1(t,\phi)=g(t)\cos \phi ,\qquad u_2(t,\phi)=g(t)\sin \phi,
\]
where $t$ is the angle or geodesic distance from the north pole and $\phi$ is the angle around the line of latitude. We may compute directly that the first two eigenfunctions on the graph concentrate on the two ``straps'', respectively: 
\[
v_1\left(\pi+\tau,0\right)=-\frac{\sin\left(\sqrt{\mu_1(B_\theta)} \, \tau\right)}{\sin\left(\sqrt{\mu_1(B_\theta)}(\pi-\theta)\right)} , \qquad  v_1\left(\pi+\tau,\pm \frac{\pi}{2}\right)=0 ,
\]
for $\tau\in [-(\pi-\theta),\pi-\theta]$, and similarly
\[
v_2\left(\pi+\tau,0\right)=0, \qquad v_2\left(\pi+\tau,\pm\frac{\pi}{2}\right)=-\frac{\sin\left(\sqrt{\mu_1(B_\theta)} \, \tau\right)}{\sin\left(\sqrt{\mu_1(B_\theta)}(\pi-\theta)\right)}g(\theta) .
\]

So for $i=1,2$ we have
\begin{align*}
& \int_{G}\left(|\nabla_G v_i|^2-\mu_i(B_\theta)v_i^2\right)d\mathcal{H}^1 \\
&=\frac{g(\theta)^2\mu_1(B_\theta)}{\sin^2\left(\sqrt{\mu_1(B_\theta)}(\pi-\theta)\right)}\int_{-(\pi-\theta)}^{\pi-\theta}\left(\cos^2\left(\sqrt{\mu_1(B_\theta)} \, \tau\right)-\sin^2\left(\sqrt{\mu_1(B_\theta)} \, \tau\right)\right)d\tau\\
&=2g(\theta)^2\sqrt{\mu_1(B_\theta)}\cot\left(\sqrt{\mu_1(B_\theta)}(\pi-\theta)\right) .
\end{align*}
Hence
\[
\mu_1(H_{\theta,\eps})=\mu_1(B_\theta)+4\e g(\theta)^2\sqrt{\mu_1(B_\theta)}\cot\left(\sqrt{\mu_1(B_\theta)}(\pi-\theta)\right)+o(\e).
\]
As a consequence, for any $\theta>\Theta'$ we obtain $\mu_1(H_{\theta,\eps})>\mu_1(B_{\theta(\eps)})$ for a small enough $\e$ as soon as the inequality
\[\sqrt{\mu_1(B_\theta)}\cot\left(\sqrt{\mu_1(B_\theta)}(\pi-\theta)\right)>(\pi-\theta)\left(\frac{1}{\sin^2 \theta}-\mu_1(B_\theta)\right)\]
is verified. When $\theta\in (\Theta^\prime,\Theta]$ the left-hand side is positive and the right-hand side is nonpositive, so that the inequality holds on $(\Theta^\prime,\Theta^{\prime\prime})$, for some $\Theta^{\prime\prime}$ slightly larger than $\Theta$. Numerics suggest that the inequality holds for all $\theta\in (\Theta',\pi)$, so that helmet counterexamples can be constructed for all such $\theta$, but we do not prove this claim.

\section{No asymptotic counterexample with a single hole}
\label{sec_asymptotic}

Here we are interested in the possibility of finding a simply connected counterexample. Precisely, consider a set $\Om^\eps$ of the form

\[
\Om^\eps=\Sd\setminus \psi(\eps\om) ,
\]
where $\om\subset\R^2$ is a smooth simply connected set with finite measure, where $\eps > 0$ is small and $\psi:\R^2\to \Sd$ is the stereographic projection  sending the origin $0$ to the south pole $-e_3$, with Jacobian $1$ at $0$.

Is it possible to obtain, for a fixed $\om$ and arbitrarily small $\eps$, a counterexample to the maximality of the spherical cap among domains of the same area? We prove that such a counterexample  works neither for the functional $\mu_1$ nor  for $\mu_1+\mu_2$ or $\left( \frac{1}{\mu_1}+\frac{1}{\mu_2} \right)^{\! -1}$.  Our  result does not imply that the spherical cap maximizes $\mu_1$ among ``large'' spherical domains, since we do not explore general sequences of domains expanding to fill the sphere,  merely domains  that are given  by the complement of a dilation of a fixed simply connected set $\omega$.

Since $\mu_1=\mu_2=\mu_3=2$ on the full sphere $\Sd$, with eigenfunctions given by the $L^2$-normalized coordinate functions $u_i=\sqrt{3/4\pi} \, x_i$ for $i=1,2,3$, Proposition \ref{Prop_limholes} with $M=\Om=\Sd$ implies that the eigenvalues of the sphere with hole $\psi(\eps\om)$  are of the form
\[\mu_i(\Om^\eps)=2+\eps^2\kappa_i+o(\eps^2)\]
where $\kappa_i$ is the $i$-th eigenvalue of the matrix
\[
\big( \, 2|\om|v_i(0)v_j(0)-\langle M_\om \nabla v_i(0),\nabla v_i(0)\rangle \, \big)_{1\leq i,j\leq 3}
\]
and where we wrote $v_i=u_i\circ \psi$. In this case we have $v_1(0)=v_2(0)=0, v_3(0)=-\sqrt{3/4\pi}$, and $\nabla v_3(0)=0, \nabla v_i(0)=\sqrt{\frac{3}{4\pi}}\, e_i$ for $i=1,2$. Thus this matrix has the form
\[
\frac{3}{4\pi}\begin{pmatrix}
-(M_\om)_{1,1} & -(M_\om)_{1,2} & 0 \\
-(M_\om)_{2,1} & -(M_\om)_{2,2} & 0 \\
0 & 0 & 2|\om|
\end{pmatrix} .
\]
So, we obtain
\begin{align*}
\mu_1(\Om^\eps)+\mu_2(\Om^\eps)&=4-\frac{3\text{Tr}(M_\om)}{4\pi}\eps^2+o(\eps^2) , \\
\mu_3(\Om^\eps)&=2+\frac{3|\om|}{2\pi}\eps^2+o(\eps^2) . 
\end{align*}
We recall that using the formula \eqref{eq_computation_M}, we have $\text{Tr}(M_\om)=4\pi \cp[\om]^2$. As a consequence

\[\mu_1(\Om^\eps)+\mu_2(\Om^\eps)=4-3\cp[\om]^2\eps^2+o(\eps^2).\]

As observed above, the disk has the smallest capacity among sets of given measure. Hence in the limit as $\eps \to 0$, the sum $\mu_1(\Om^\eps)+\mu_2(\Om^\eps)$ of the first two eigenvalues is smaller than the corresponding sum when $\om$ is a disk, that is, when $\Om^\eps$ is a spherical cap. The same conclusion follows for the first eigenvalue $\mu_1(\Om^\eps)$, since $\mu_1=\mu_2$ for the spherical cap. Analogous computations may be done for $\left( \frac{1}{\mu_1}+\frac{1}{\mu_2}\right)^{\! -1}$.

\section{Proof of formulas \eqref{W_n_M_n}, \eqref{formula_shapederivative}, \eqref{eq_Fni}, and Lemma \ref{lem_estansatz}, and harmonic extension to the holes}
\label{sec:lem_estansatz}

Here we prove some results stated in Section \ref{sec_conv}. Consider the setting and notations of Proposition \ref{Prop_limholes}.

\subsection*{Proof of formula \eqref{W_n_M_n}}
The harmonic vector field $W_n$ vanishes at infinity and, after transforming infinity to the origin by the Kelvin inversion, we know $W_n$ is locally linear. That is,  
\begin{equation} \label{WC}
W_n(x) = \frac{C_n x}{2\pi |x|^2} + \mathcal{O}_{|x|\to\infty} \! \left(\frac{1}{|x|^2}\right) 
\end{equation}
for some $2 \times 2$ matrix $C_n$. We will show for formula \eqref{W_n_M_n} that $C_n=M_n$.

From now on, we drop the subscript $n$ on the vector field $W$, the matrices $C$ and $M$, and the set $\om$. The goal is to relate the matrix entries by $M_{p,q}=C_{p,q}$. Let $R \gg 1$. Writing $W_p$ for the $p$-th component of $W$, we find 
\begin{align*}
& \delta_{p,q}  |\om| + \int_{D_R \setminus \om} \nabla W_p \cdot \nabla W_q \\
& = \int_\om \nabla \cdot (x_q e_p) + \int_{D_R \setminus \om} \nabla \cdot (W_p \nabla W_q ) \\
& = \int_{\partial \om} \left( x_q \nu_p - W_p \, \partial_\nu W_q \right) + \int_{\partial D_R} W_p \, \partial_\nu W_q \qquad \text{by the divergence theorem} \\
& = \int_{\partial \om} \left( - x_q \, \partial_\nu W_p + W_p \nu_q \right) + \mathcal{O}_{|x|\to\infty} \! \left(\frac{1}{R^2}\right) 
\end{align*}
by the boundary condition $\partial_\nu W_n=-\nu$ on $\partial \om$. Applying Green's theorem and recalling the convention that the normal vector points outward from both $\om$ and the disk $D_R$, we see 
\begin{align*}
& \delta_{p,q} |\om| + \int_{D_R \setminus \om} \nabla W_p \cdot \nabla W_q \\
& = \int_{\partial D_R} \left( - x_q \, \partial_\nu W_p + W_p \nu_q \right) + \int_{D_R \setminus \om} \left( x_q \Delta W_p - W_p \Delta x_q \right) + \mathcal{O}_{|x|\to\infty} \! \left(\frac{1}{R^2}\right) .
\end{align*}
The second integral on the right vanishes since both $W_p$ and $x_q$ are harmonic. For the first integral on the right, observe that the normal vector $\nu = x/|x|$ is radially outward on the circle, and use \eqref{WC} to compute that 
\[
W_p = \frac{C_p \cdot x}{2\pi |x|^2} + \mathcal{O}_{|x|\to\infty} \! \left(\frac{1}{|x|^2}\right) , \qquad - x_q \, \partial_\nu W_p = x_q \frac{C_p \cdot x}{2\pi |x|^3} + \mathcal{O}_{|x|\to\infty} \! \left(\frac{1}{|x|^2}\right) ,
\]
where $C_p$ is the $p$-th row of $C$. It is now straightforward to integrate over the circle $\partial D_R$, deducing that 
\[
\delta_{p,q}  |\om| + \int_{D_R \setminus \om} \nabla W_p \cdot \nabla W_q 
= C_{p,q} + \mathcal{O}_{|x|\to\infty} \! \left(\frac{1}{R}\right) .
\]
Letting $R \to \infty$ shows that $M_{p,q}=C_{p,q}$.

\subsection*{Proof of formula \eqref{formula_shapederivative}}
The goal is to show
\[
-\int_{\Om}(Ku_i)u_j+\int_{\partial\Om}\langle \eta , \nabla u_i\rangle \, u_j=\int_{\partial\Om}\left(\langle \nabla u_i,\nabla u_j \rangle -\mu u_i u_j\right)\langle \phi,\nu\rangle
\]
where $K$ and $\eta$ are defined at the beginning of Section \ref{subsec_ansatz}. Similar computations may be found in \cite[Th.\ 5.7.2.]{HenrotPierre}. In our case we rely on the computations in Fall and Weth \cite[Sections 2 and 3]{Fall}. 

Since the metric varies in this section, we make its dependence appear explicitly: $\langle \cdot,\cdot\rangle_{t}$ is the scalar product associated to $g_t$ and $\nabla_{t}$ is the gradient, so that $\nabla_t \, \cdot$ is the divergence. Also, $\Delta_t=\nabla_t\cdot\nabla_t$ is the Laplacian, $dm_t$ is the area measure associated to $g_t$, and $\nu_t$ is the outward normal unit vector on $\partial \Om$, and $ds_t$ is the boundary measure on $\partial\Om$. We also denote by $D$ the Levi-Civita connection associated to $g$. 
Green's theorem says
\begin{align*}
-\int_{\Om}u_j\Delta_{t}u_i \, dm_t+\int_{\partial\Om} u_j \, du_i(\nu_t) \, ds_{t}=\int_{\Om}\langle \nabla_{t}u_i,\nabla_{t}u_j\rangle_{t} \, dm_t .
\end{align*}

A commutator identity will be useful. For vector fields $\phi$ and $\psi$ and functions $h$, one has
\[
\langle [\psi,\phi],\nabla h \rangle = \psi \, \langle \phi,\nabla h \rangle - \phi \, \langle \psi, \nabla h \rangle .
\]
When $\psi$ is chosen to be a gradient field, say $\psi = \nabla f$, the identity implies that 
\begin{equation} \label{commutator}
\langle [\nabla f,\phi],\nabla h \rangle = \langle \nabla f , \nabla \langle \phi, \nabla h \rangle \rangle - \phi \, \langle \nabla f , \nabla h \rangle  .
\end{equation}
We have 
\begin{align*}\allowdisplaybreaks
& 
\left.\frac{d}{dt}\right|_{t=0} \int_{\Om}\langle \nabla_{t}u_i,\nabla_{t}u_j\rangle_{t} \, dm_t \\
& 
=\int_{\Om}\left(\langle \nabla u_i,\nabla u_j\rangle \, \nabla\cdot\phi- \left\{ \langle  D_{\nabla u_i}\phi,\nabla u_j\rangle + \langle  D_{\nabla u_j}\phi,\nabla u_i\rangle\right\}\right)dm\\
& 
\hspace{2cm} \text{by using Fall and Weth \cite[formulas (7) and (14)]{Fall}}\\
& 
=\int_{\partial\Om}\langle \nabla u_i,\nabla u_j\rangle\ \langle\phi,\nu\rangle \, ds-\int_{\Om} \langle \phi,\nabla\langle \nabla u_i,\nabla u_j\rangle\rangle \, dm \qquad \text{by parts} \\
& \qquad -\int_{\Om}\left\{ \langle  D_\phi \nabla u_i ,\nabla u_j\rangle+\langle  D_\phi \nabla u_j ,\nabla u_i\rangle + \langle [\nabla u_i,\phi],\nabla u_j \rangle + \langle [\nabla u_j,\phi],\nabla u_i \rangle \right\} dm\\
& 
 \hspace{3cm} \text{by the torsion-free property of the connection}\\
& 
=\int_{\partial\Om}\langle \nabla u_i,\nabla u_j\rangle\ \langle\phi,\nu\rangle \, ds-\int_{\Om} \phi \, \langle \nabla u_i,\nabla u_j\rangle \, dm \\
& \qquad -\int_{\Om} \left\{ \phi \, \langle  \nabla u_i ,\nabla u_j\rangle + \langle \nabla u_i , \nabla \langle \phi,\nabla u_j\rangle \rangle + \langle \nabla u_j , \nabla \langle \phi,\nabla u_i\rangle \rangle - 2\phi \, \langle \nabla u_i , \nabla u_j \rangle \right\} dm
\end{align*}
by the Leibniz property of the connection, and the commutator identity \eqref{commutator}. Cancelling terms, we find 
\begin{align*}
& 
\left.\frac{d}{dt}\right|_{t=0} \int_{\Om}\langle \nabla_{t}u_i,\nabla_{t}u_j\rangle_{t} \, dm_t \\
&
=\int_{\partial\Om}\langle \nabla u_i,\nabla u_j\rangle\ \langle\phi,\nu\rangle \, ds -\int_{\Om} \left\{ \langle \nabla u_i , \nabla \langle \phi,\nabla u_j\rangle \rangle + \langle \nabla u_j , \nabla \langle \phi,\nabla u_i\rangle \rangle \right\} dm \\
& 
=\int_{\partial\Om}\langle \nabla u_i,\nabla u_j\rangle\ \langle\phi,\nu\rangle \, ds+\int_{\Om}\left\{\langle \phi,\nabla u_j\rangle \Delta u_i + \langle \phi,\nabla u_i\rangle \Delta u_j \right\}dm \\
& 
\hspace{2cm} \text{by Green's theorem since $\langle\nabla u_i,\nu\rangle=0$ and $\langle\nabla u_j,\nu\rangle=0$ on $\partial \Om$} \\
& =\int_{\partial\Om}\langle \nabla u_i,\nabla u_j\rangle\ \langle\phi,\nu\rangle \, ds-\mu\int_{\Om}\langle \phi,\nabla (u_iu_j) \rangle \, dm
\end{align*}
since $\Delta u_i=-\mu u_i$ and $\Delta u_j=-\mu u_j$.

Hence from the divergence theorem we conclude 
\begin{equation} \label{FallWeth}
\left.\frac{d}{dt}\right|_{t=0} \int_{\Om}\langle \nabla_{t}u_i,\nabla_{t}u_j\rangle_{t} \, dm_t 
=\int_{\partial\Om}\left\{\langle \nabla u_i,\nabla u_j\rangle-\mu u_i u_j\right\} \langle\phi,\nu\rangle \, ds+ \mu \int_{\Om}u_iu_j(\nabla\cdot\phi) \, dm .
\end{equation}

Since $Kf$ and $\eta$ are respectively the derivatives at $t=0$ of $\Delta_t f$ and $\nu_t$, we have 
\begin{align*}
& \hspace*{-0.75cm} -\int_{\Om} u_jK u_i \, dm+\int_{\partial\Om}u_j du_i(\eta) \, ds \\
& =\left.\frac{d}{dt}\right|_{t=0}\left[-\int_{\Om}u_j\Delta_t u_i \, dm_t+\int_{\partial\Om}u_j du_i(\nu_t) \, ds_t+\int_{\Om}u_j\Delta u_i \, dm_t-\int_{\partial\Om}u_j du_i(\nu) \, ds_t\right]\\
&
=\left.\frac{d}{dt}\right|_{t=0}\left[\int_{\Om} \langle \nabla_t u_i,\nabla_t u_j\rangle_t \, dm_t +\int_{\Om}u_j\Delta u_i \, dm_t \right] \qquad \text{since $du_i(\nu)=0$} \\
& 
=\int_{\partial\Om}\left\{\langle \nabla u_i,\nabla u_j\rangle-\mu u_i u_j\right\} \langle\phi,\nu\rangle \, ds
\end{align*}
by \eqref{FallWeth}, where in the last line we used again that $\Delta u_i=-\mu u_i$, and that $\left. \frac{d\ }{dt} \right|_{t=0} dm_t = (\nabla \cdot \phi) \, dm$ (see \cite[formula (7)]{Fall}).

\subsection*{Proof of formula \eqref{eq_Fni}}
Recall $F_{n,i}$ was defined in \eqref{def_Fni}. Write $\tilde{u}_j = u_j \circ \psi_n$ in the proof below. Since $F_{n,i}$ is supported in $B_r$, the left side of \eqref{eq_Fni} equals 
\begin{align*}
-\int_{\psi_n(B_r)} (F_{n,i}\circ\psi_n^{-1})u_j
& = - \int_{B_r} \rho_n F_{n,i} \tilde{u}_j \\
& = - \int_{B_r \setminus \{0\}} \tilde{u}_j (\Delta + \mu \rho_n) g && \text{by \eqref{def_Fni}, with $g$ as defined below,} \\
& = - \int_{B_r \setminus \{0\}} (\tilde{u}_j \Delta g - g \Delta \tilde{u}_j ) && \text{since $\Delta \tilde{u}_j = -\mu \rho_n \tilde{u}_j$,} \\
& = \lim_{s \to 0} \int_{\partial B_s} (\tilde{u}_j \partial_r g - g \partial_r \tilde{u}_j) && \text{by Green's theorem,}
\end{align*}
where the compactly supported function $g$ on $B_r$ is
\[
g(x) = \zeta(x)\left(\frac{\langle x,M_n\nabla \tilde{u}_i(0)\rangle}{2\pi|x|^2}+\frac{\rho_n(0)\mu |\om_n|\tilde{u}_i(0)}{2\pi}\log|x|\right) .
\]

We consider each boundary integral in turn. First, the definition of $g$ implies that 
\begin{align*}
\int_{\partial B_s} g \partial_r \tilde{u}_j 
& = \int_{\partial B_s} \left( \frac{1}{2\pi s} \langle x/|x| , M_n\nabla \tilde{u}_i(0)\rangle \, \langle x/|x| , \nabla \tilde{u}_j(0)\rangle + o \! \left(\frac{1}{s}\right) \right) \\
& \to \frac{1}{2} \langle M_n\nabla \tilde{u}_i(0),\nabla \tilde{u}_j(0)\rangle
\end{align*}
as $s \to 0$, by parameterizing the circle with $x=(s\cos \phi,s\sin \phi)$. Next, 
\begin{align*}
\tilde{u}_j \partial_r g 
& = \left( \tilde{u}_j(0) + s \langle x/|x| , \nabla \tilde{u}_j(0) \rangle + o(s) \right) \\
& \hspace{2cm} \times \left( - \frac{1}{2\pi s^2} \langle x/|x| , M_n \nabla \tilde{u}_i(0) \rangle + \frac{1}{2\pi s} \rho_n(0) \mu |\om_n| \tilde{u}_i(0) \right) \\
& = - \frac{\tilde{u}_j(0)}{2\pi s^2} \langle x/|x| , M_n\nabla \tilde{u}_i(0) \rangle - \frac{1}{2\pi s} \langle x/|x| , \nabla \tilde{u}_j(0) \rangle \, \langle x/|x| , M_n \nabla \tilde{u}_i(0) \rangle \\
& \hspace{2cm} + \frac{1}{2\pi s} \rho_n(0) \mu |\om_n| \tilde{u}_i(0) \tilde{u}_j(0) + o \! \left(\frac{1}{s}\right) .
\end{align*}
Parameterizing the circle and integrating yields that 
\[
\int_{\partial B_s} \tilde{u}_j \partial_r g \to 0 - \frac{1}{2} \langle \nabla \tilde{u}_j(0),  M_n\nabla \tilde{u}_i(0)\rangle + \rho_n(0) \mu |\om_n| \tilde{u}_i(0) \tilde{u}_j(0) . 
\]
Combining the two boundary limits completes the proof of formula \eqref{eq_Fni}.

\subsection*{Proof of Lemma \ref{lem_estansatz}} To lighten the notation, we drop the index ``$i$'' in all that follows. Consider $z\in H^1(\Om^\eps)$ with unit norm. 

First we separate the estimate in Lemma \ref{lem_estansatz} into two parts, by Green's theorem: 
\begin{equation}\label{eq_decompositionregularboundary}
\int_{\Om^\eps}\left(\mu_{\eps}u_{\eps}z- \langle \nabla u_{\eps},\nabla z \rangle\right)=\underbrace{\int_{\Om^\eps}z(\Delta+\mu_{\eps})u_{\eps}-\int_{\partial\phi^{\eps^2}(\Om)}z\partial_\nu u_\eps}_{\text{regular term}}+\sum_{n=1}^{N}\underbrace{\int_{\partial(\eps\om_n)} (z\circ \psi_n) \, \partial_{\nu}(u_\eps\circ \psi_n)}_{\text{boundary term}} ,
\end{equation}
noting in the boundary term that $\psi_n$ is conformal and so the change of variable factor for the normal derivative compensates exactly for the tangential change of variable factor due to the arclength element. 

\subsubsection*{Boundary term}
Fix $n\in\{1,\hdots,N\}$ and write $\tilde{f}=f\circ\psi_n$ for $f=u,u_\eps,v,z$, so that the ``tilde'' version of each function is defined either on $B_r$ or  $B_r \setminus \eps \om_n$, as the case may be. The boundary term is $\int_{\partial (\eps \om_n)} \tilde{z} \, \partial_\nu \tilde{u}_\eps$. 

In terms of these functions, on $B_r \setminus \eps \om_n$ the ansatz \eqref{eq_ansatz} becomes 
\begin{align*}
\tilde{u}_{\eps}(x)&=\tilde{u}(x)+\eps^2 \tilde{v}(x)+\eps \zeta(x) \langle W_n(x/\eps),\nabla \tilde{u}(0)\rangle\\
& \hspace{1.35cm} +\eps^2 \zeta(x)  \left( -L_n(x/\eps):D^2 \tilde{u}(0)+\frac{\rho_n(0)\mu|\om_n|\tilde{u}(0)}{2\pi}\log \eps\right)\\
\mu_{\eps}&=\mu+\eps^2\kappa
\end{align*}
where $-\Delta \tilde{u}=\rho_n \mu \tilde{u}$ and $\tilde{v}$ verifies
\[
-\Delta \tilde{v} -\rho_n \mu \tilde{v}=\rho_n \kappa \tilde{u}+\rho_n F_n
\]
in $B_r$, where we recall that $F_n$ is defined in equation \eqref{def_Fni}. The normal derivative of $\tilde{u}_\eps$ can be computed from the ansatz, for $x\in\partial(\eps\om_n)$:
\begin{align*}
\partial_\nu \tilde{u}_\eps(x)&=\langle \nu(x),\nabla \tilde{u}(x)\rangle+ \langle (\partial_\nu W_n)(x/\eps),\nabla\tilde{u}(0)\rangle-\eps\partial_\nu L_n(x/\eps):D^2 \tilde{u}(0)+\eps^2\partial_\nu \tilde{v}(x)\\
&=\langle \nu(x),\nabla \tilde{u}(x)-\nabla \tilde{u}(0)- D^2\tilde{u}(0)x \rangle+\eps^2\partial_\nu \tilde{v}(x) 
\end{align*}
by using the boundary conditions $\partial_\nu W_n(x)=-\nu(x)$ and $\partial_\nu L_n(x)=x\otimes \nu(x)$ for $x \in \partial \om_n$, together with the fact that the unit normal vector remains invariant under scaling ($\nu_{\om_n}{\!}(x/\eps)=\nu_{\eps \om_n}(x)$ when $x \in \partial(\eps \om_n)$). 

Regard $\tilde{z}=z \circ \psi_n$ as being extended harmonically into $\eps\om_n$. The extended $\tilde{z}$ is still bounded (independent of $\eps$) in $H^1(B_r)$, by Lemma \ref{lem_extension_holes} below and using the scale-invariance of the Dirichlet energy. As a consequence, $\tilde{z}$ is also bounded in $L^p(B_r)$ for all $p<\infty$, and in particular is bounded in $L^{12}(B_{r})$.

From the definition of $v$ in \eqref{vdef}, we see $\Delta \tilde v\in L^{3/2}(B_{r})$ and so elliptic regularity implies $\tilde{v} \in W^{2,3/2}(B_{r/2})$ and so $\nabla \tilde v\in L^4(B_{r/2})$. (In fact, $\Delta \tilde{v} \in L^p$ for any $p\in [1,2)$ and so $\tilde{v} \in W^{2,p}(B_{r/2})$ and hence $\nabla v\in L^q(B_{r/2})$ for all $q < \infty$.) We decompose

\begin{align*}
\int_{\partial (\eps \om_n)} \tilde{z} \, \partial_\nu \tilde{u}_\eps
& = \int_{\partial(\eps\om_n)}\tilde{z} \, \langle \nabla \tilde{u}-\nabla\tilde{u}(0)-D^2\tilde{u}(0)x+\eps^2\nabla\tilde{v}, \nu\rangle \\
&=\int_{\eps\om_n}\nabla \tilde{z}\cdot \left(\nabla \tilde{u}-\nabla\tilde{u}(0)-D^2\tilde{u}(0)x+\eps^2\nabla\tilde{v}\right)\\
&+\int_{\eps\om_n}\tilde{z}\nabla\cdot\left(\nabla \tilde{u}-\nabla\tilde{u}(0)-D^2\tilde{u}(0)x+\eps^2\nabla\tilde{v}\right) 
\end{align*}
by the divergence theorem. In the last line, note that $\nabla \cdot D^2\tilde{u}(0)x=\Delta\tilde{u}(0)$. We bound terms separately as follows, using H\"{o}lder's inequality and that the area of $\eps \om_n$ equals $\eps^2 |\om_n|$:
\begin{align*}
\left|\int_{\eps\om_n}\nabla \tilde{z}\cdot \left(\nabla \tilde{u}-\nabla\tilde{u}(0)-D^2\tilde{u}(0)x\right)\right|&\leq C\Vert \tilde{z}\Vert_{H^1(B_r)}|\eps\om_n|^{\frac{1}{2}}\sup_{\eps\om_n}\left|\nabla \tilde{u}-\nabla\tilde{u}(0)-D^2\tilde{u}(0)x\right|\leq C'\eps^3 \\
\left|\int_{\eps\om_n}\tilde{z}\left(\Delta\tilde{u}-\Delta\tilde{u}(0)\right)\right|&\leq C\Vert\tilde{z}\Vert_{L^4(B_r)}|\eps\om_n|^{1-\frac{1}{4}}\sup_{\eps\om_n}\left|\Delta \tilde{u}-\Delta\tilde{u}(0)\right|\leq C'\eps^{2+\frac{1}{2}}\\
\left|\int_{\eps\om_n}\nabla \tilde{z}\cdot \eps^2\nabla\tilde{v}\right|&\leq C\eps^2\Vert \tilde{z}\Vert_{H^1(B_r)}\Vert \nabla\tilde{v}\Vert_{L^4(B_{r/2})}|\eps\om_n|^{\frac{1}{4}}\leq C'\eps^{2+\frac{1}{2}}\\
\left|\int_{\eps\om_n}\eps^2\tilde{z}\Delta\tilde{v}\right|&\leq C\eps^2\Vert \tilde{z}\Vert_{L^{12}(B_r)}\Vert \Delta \tilde{v}\Vert_{L^{3/2}(B_r)}|\eps\om_n|^\frac{1}{4}\leq C'\eps^{2+\frac{1}{2}}
\end{align*}
Combining the estimates above, we see that the boundary term (near the $n$-th hole) is bounded by $C\eps^{2+\frac{1}{2}}$, as desired for Lemma \ref{lem_estansatz}. 

\subsubsection*{Regular term 1}
We start by considering the first regular term $\int_{\Om^\eps}z(\Delta+\mu_{\eps})u_{\eps}$, splitting it into a bulk term and terms near the holes:
\begin{equation} \label{regularterms}
\int_{\Om^\eps}z(\Delta+\mu_{\eps})u_{\eps}=\int_{\Om^\eps\setminus\cup_n \psi_n(B_r)}z(\Delta +\mu_{\eps})u_{\eps}+\sum_{n=1}^{N}\int_{B_r\setminus\eps\om_n}\tilde{z}(\Delta \tilde{u}_\eps+\rho \mu_\eps \tilde{u}_\eps) .
\end{equation}
We start with the term near the $n$-th hole. Let us give a few preliminary estimates on the exterior problems for $W_n$ and $L_n$ in Section \ref{sec_conv}: there is a constant $C>0$ such that for all $x\in\R^2\setminus\om_n$, 
\[\left|W_n(x)-\frac{M_nx}{2\pi|x|^2}\right|\leq \frac{C}{|x|^2},\qquad \left|\partial_{x_p} W_n(x)-\partial_{x_p}\frac{M_nx}{2\pi|x|^2}\right|\leq \frac{C}{|x|^3},\]
and similarly
\[\left|L_n(x)-\frac{|\om_n|}{2\pi}(\log|x|)I_2\right|\leq \frac{C}{|x|},\qquad \left|\partial_{x_p} L_n(x)-\frac{|\om_n|}{2\pi}(\partial_{x_p}\log|x|)I_2\right|\leq \frac{C}{|x|^2},\]
for $p=1,2$. 
 
Formulas \eqref{def_Fni}, \eqref{vdef} and \eqref{eq_ansatz} give equations satisfied by the functions $F_n, \tilde{v} = v \circ \psi_n$ and $\tilde{u}_\eps=u_\eps \circ \psi_n$ when $x\in B_r\setminus\eps\om_n$, and by using those equations we may compute 

\begin{align*}
& (\Delta+\rho \mu_\eps)\tilde u_\eps(x) \\
&=(\Delta +\rho \mu) \tilde{u}  \\
&\quad +\eps(\Delta+\rho\mu)\left[\zeta(x)\left(W_n(x/\eps)\cdot\nabla\tilde  u(0)- \frac{\langle x/ \eps  ,M_n\nabla \tilde u(0)\rangle}{2\pi|x/ \eps  |^2}\right)\right]\\
&\quad+\eps^2(\Delta+\rho\mu)\left[\zeta(x)\left(- L_n(x/\eps):D^2 \tilde u(0)-\frac{\rho_n(0)\mu|\om_n|\tilde u(0)}{2\pi}\log |x/\eps| \right)\right]\\
&\quad+\eps^3  \rho  \kappa \left[\zeta(x)W_n(x/\eps)\cdot\nabla\tilde{u}(0)+\eps \tilde v(x)+\eps \zeta(x)\left(-L_n(x/\eps):D^2 \tilde u(0)+\frac{\rho_n(0)\mu|\om_n|\tilde u(0)}{2\pi}\log \eps \right)\right] .
\end{align*}
Since $(\Delta +\rho \mu) \tilde{u}=0$ and $W_n(x), L_n(x), x/|x|^2$ and $\log |x|$ are harmonic, the last formula implies that 
\begin{align*}
(\Delta+\rho \mu_\eps)\tilde u_\eps(x) 
&= \eps(\Delta \zeta+  \rho  \mu \zeta)\left(W_n(x/\eps)\cdot\nabla \tilde{u}(0)- \frac{\langle x/ \eps ,M_n\nabla \tilde{u}(0)\rangle}{2\pi|x/ \eps |^2}\right)\\
&\quad+2\sum_{p=1}^{2}\partial_{x_p}\zeta\left( (\partial_{x_p} W_n)(x/\eps)\cdot\nabla \tilde{u}(0) - \partial_{x_p} \! \left. \frac{\langle \, \cdot \, ,M_n\nabla \tilde{u}(0)\rangle}{2\pi|\cdot|^2} \right|_{x/\eps} \right)\\
&\quad-\eps^2(\Delta\zeta+  \rho  \mu\zeta)\left(L_n(x/\eps):D^2 \tilde{u}(0)+\frac{\rho_n(0)\mu|\om_n| \tilde{u}(0)}{2\pi}\log |x/\eps| \right)\\
&\quad-2\eps\sum_{p=1}^{2} \partial_{x_p}\zeta\left( (\partial_{x_p} L_n)(x/\eps):D^2 \tilde{u}(0)+\frac{\rho_n(0)\mu|\om_n| \tilde{u}(0)}{2\pi}  (\partial_{x_p} \log |\cdot|)(x/\eps)  \right)+\mathcal{O}\left(\eps^3\right) .
\end{align*}
Using the error bounds on the exterior problems and noting that $I_2 : D^2 \tilde{u}(0)=\Delta \tilde{u}(0)$, we find 
\[
\left|(\Delta+\rho \mu_\eps)\tilde u_\eps(x)\right|\leq C\left(\eps^3+\frac{\eps^3}{|x|^2}\right)  \leq C \eps^3 \frac{1}{|x|^2}  , \qquad x\in B_r\setminus \eps\om_n .
\]
As a consequence, and using the fact that $\tilde{z}$ is bounded in $L^4(B_r)$, we get that 
\begin{align*}
\int_{B_r\setminus\eps\om_n}\tilde{z}(\Delta \tilde{u}_\eps+\rho \mu_\eps \tilde{u}_\eps)&\leq C\eps^3 \, \Vert \tilde{z}\Vert_{L^4(B_r)} \left(\int_{B_r\setminus\eps\om_n}\!\! \left(\frac{1}{|x|^2}\right)^{\!\! 4/3}\right)^{\!\! 3/4} \\
&\leq C\eps^{2+\frac{1}{2}} .
\end{align*}

Next we treat the bulk term in \eqref{regularterms}, which is $\int_{\Om^\eps\setminus\cup_n \psi_n(B_r)}z(\Delta +\mu_{\eps})u_{\eps}$. First note that $u_\eps$ is bounded in $\mathcal{C}^p(\Om^\eps\setminus \cup_n \psi_n(B_r))$ for any $p\in\N$, by the definition \eqref{eq_ansatz} of $u_\eps$ and elliptic regularity for $v$ (which is valid up to the boundary since $\partial\Om$ is smooth and the Neumann condition is satisfied). 

Denoting by $g_t$ the metric on $\Omega$ obtained by pulling back the area measure of $\phi^{t}(\Om)$, and by $\Delta_t$ and $m_t$ the associated Laplacian and area measure, we have
\begin{align*}
\int_{\Om^\eps\setminus\cup_n \psi_n(B_r)}z(\Delta +\mu_{\eps})u_{\eps}&=\int_{\Om\setminus\cup_n \psi_n(B_r)} (z\circ \phi^{\eps^2}) (\Delta_{\eps^2} +\mu_\eps )\widehat{u}_\eps \, dm_{\eps^2}
\end{align*}
where $\widehat{u}_\eps =u_\eps\circ\phi^{\eps^2}$ was defined in the ansatz \eqref{eq_ansatz}. Then, in $\Om\setminus\cup_n \psi_n(B_r)$:
\begin{align*}
(\Delta_{\eps^2} +\mu_\eps )\widehat{u}_\eps &=(\Delta +\mu+\eps^2( K+\kappa)+(\Delta_{\eps^2}-\Delta-\eps^2 K))(u+\eps^2v) \qquad \text{by ansatz \eqref{eq_ansatz}} \\
&=(\Delta+\mu)u+\eps^2\left(\Delta v+\mu v+\kappa u+Ku\right)\\
&\quad +(\Delta_{\eps^2}-\Delta-\eps^2 K)(u+\eps^2v) +\eps^4  (K+\kappa)  v .
\end{align*}
The first of these two lines vanishes by the definitions of $u$ and $v$, and the second line is of order $\mathcal{O}(\eps^4)$ (by definition of $K$, $\Delta_{\eps^2}-\Delta-\eps^2 K$ is a second order differential operator with coefficient bounded uniformly by $C\eps^4$).

\subsubsection*{Regular term 2}
Finally, we treat similarly the second regular term of equation \eqref{eq_decompositionregularboundary}, which is $\int_{\partial\phi^{\eps^2}(\Om)}z \, \partial_\nu u_\eps$. 

Recall from earlier in the paper that $\nu_t=(\phi^t)^*\nu_{\partial\phi^t(\Om)}$ is the outward normal vector with respect to the metric $g_t$ on $\Om$. Taking $t=\eps^2$, the formula says $\nu_{\eps^2}=(\phi^{\eps^2})^*\nu$ where $\nu$ is the outward normal vector on $\partial\phi^{\eps^2}(\Om)$. Hence the normal derivative of $u_\eps$ is $\partial_\nu u_\eps = du_\eps(\nu)=d\widehat{u}_\eps(\nu_{\eps^2})$, using that $u_\eps \circ \phi^{\eps^2} = \widehat{u}_\eps$. Thus 

\begin{align*}
\partial_\nu u_\eps = d\widehat{u}_\eps(\nu_{\eps^2})&=\langle\nabla\widehat{u}_\eps,\nu_{\eps^2}\rangle\\
&=\langle \nabla(u+\eps^2 v),\nu +\eps^2\eta +(\nu_{\eps^2}-\nu-\eps^2 \eta)\rangle\\
&=\langle\nabla u,\nu\rangle+\eps^2\left(\langle \nabla v,\nu\rangle+\langle \nabla u,\eta\rangle\right)\\
&\quad +\langle \nabla(u+\eps^2 v),\nu_{\eps^2}-\nu-\eps^2 \eta\rangle+\eps^4 \langle \nabla v,\eta\rangle .
\end{align*}
The first line vanishes by the Neumann boundary conditions on $u$ and $v$ (that is, $\langle\nabla u,\nu\rangle=0$ and $\langle \nabla v,\nu\rangle=-\langle \nabla u,\eta\rangle$), and the second line is bounded by $\mathcal{O}(\eps^4)$. 

This uniform estimate along with a trace inequality for $z$ on $\partial \Om^\eps$ completes the proof that the second regular term of equation \eqref{eq_decompositionregularboundary} is bounded by $C\eps^4$.

\begin{lemma}[Harmonic extension across a hole] \label{lem_extension_holes}
Let $\om\subset\R^2$ be a smoothly bounded open set. If $h\in H^1(\R^2)$ is compactly supported and harmonic on $\om$ then $\int_{\om}|\nabla h|^2\leq C_\om \int_{\R^2\setminus\om}|\nabla h|^2$ for some constant $C_\om>0$.
\end{lemma}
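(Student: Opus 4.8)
The plan is to prove the harmonic extension estimate by exploiting the fact that the harmonic extension into $\om$ minimizes the Dirichlet energy among all $H^1$ competitors with the same boundary trace, and then controlling that minimal energy by the energy on the outer side through a trace/extension argument. First I would fix $h \in H^1(\R^2)$, compactly supported, harmonic on $\om$, and denote by $\gamma = h|_{\partial\om} \in H^{1/2}(\partial\om)$ its trace from the interior (which agrees with the trace from the exterior since $h \in H^1(\R^2)$). Since $h$ is harmonic on $\om$ with boundary data $\gamma$, it is precisely the energy minimizer: $\int_\om |\nabla h|^2 = \min\{ \int_\om |\nabla w|^2 : w \in H^1(\om),\ w|_{\partial\om} = \gamma \}$.

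The key step is then to produce \emph{one} competitor $w$ on $\om$ with boundary trace $\gamma$ whose Dirichlet energy is bounded by $C_\om \int_{\R^2\setminus\om}|\nabla h|^2$. To build it, I would pick a bounded smooth neighborhood $U$ of $\overline{\om}$ (say $\om \Subset U$) and use a bounded linear extension operator $E : H^1(U\setminus\om) \to H^1(U)$ for the Lipschitz domain $U\setminus\om$; set $w = E\big(h|_{U\setminus\om}\big)\big|_\om$. By construction $w|_{\partial\om}=\gamma$ (the trace is preserved by any reasonable extension operator, since $\partial\om$ is part of the boundary of $U\setminus\om$ and $E$ respects traces there), and
\[
\int_\om |\nabla w|^2 \le \|w\|_{H^1(\om)}^2 \le \|E(h|_{U\setminus\om})\|_{H^1(U)}^2 \le C \,\|h\|_{H^1(U\setminus\om)}^2 .
\]
Combining with minimality, $\int_\om |\nabla h|^2 \le C\,\|h\|_{H^1(U\setminus\om)}^2$. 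The remaining point is to replace the full $H^1(U\setminus\om)$ norm on the right by just the Dirichlet energy $\int_{\R^2\setminus\om}|\nabla h|^2$. This uses a Poincaré-type inequality on $U\setminus\om$: subtracting the mean $\overline{h}$ of $h$ over $U\setminus\om$ from $h$ does not change any gradient and does not change $w|_{\partial\om}$ up to an additive constant — and adding a constant to $w$ costs no Dirichlet energy — so we may replace $h$ by $h - \overline{h}$ throughout, and then $\|h-\overline h\|_{H^1(U\setminus\om)}^2 \le (1+C_P)\int_{U\setminus\om}|\nabla h|^2 \le (1+C_P)\int_{\R^2\setminus\om}|\nabla h|^2$.

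The main obstacle, and the only place requiring care, is the bookkeeping around additive constants: the minimizing property is stated for data $\gamma$, so to legitimately work with $h-\overline{h}$ one must note that the harmonic extension of $\gamma - \overline{h}$ is $h - \overline{h}$ and has the same Dirichlet energy as the harmonic extension of $\gamma$; likewise the competitor must be built from $h-\overline h$. Once that is arranged, everything is standard. I would also remark that the constant $C_\om$ depends only on $\om$ (through the choice of $U$, the extension operator norm for $U\setminus\om$, and the Poincaré constant of $U\setminus\om$), which is exactly what the statement claims, and in particular is scale-invariant under the dilations $\om \mapsto \eps\om$ used elsewhere in the paper — this last point being why only the Dirichlet energies, and not full $H^1$ norms, appear.
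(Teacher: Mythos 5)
Your proposal is correct and takes a genuinely different route from the paper's proof. You argue via the Dirichlet principle (harmonic = energy-minimizing), a bounded $H^1$-extension operator for the exterior annulus $U\setminus\omega$ to manufacture a competitor in $\omega$ with the right trace, and a Poincar\'{e} inequality to pass from the full $H^1(U\setminus\omega)$ norm back to the pure Dirichlet energy. The paper instead uses Kelvin inversion $i(z)=z/|z|^2$: it sandwiches $\int_\omega|\nabla h|^2$ between $H^{1/2}(\partial\omega)$ semi-norms, exploits the conformal invariance of both the $H^{1/2}$ semi-norm and the Dirichlet integral to transport the problem to the bounded domain $\omega^*=i(\R^2\cup\{\infty\}\setminus\overline\omega)$, and then applies the $H^1\to H^{1/2}$ trace theorem plus Poincar\'{e} there. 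What the paper's route buys is that every quantity in the chain is manifestly conformal-invariant, so the scale-invariance of $C_\omega$ (crucial when the lemma is applied to $\eps\omega_n$) is visible without any bookkeeping; what your route buys is that it avoids the inversion and the $H^{1/2}$ fractional-norm machinery, using only the Dirichlet principle, Calder\'{o}n--Stein extension, and Poincar\'{e}. Your remark that the final inequality, being a comparison of Dirichlet energies, is automatically dilation-invariant even though intermediate steps use full $H^1$ norms is exactly the right way to salvage scale-invariance in your setting. One caveat that applies equally to both proofs: the Poincar\'{e} step (yours on $U\setminus\omega$, the paper's on $\omega^*$) tacitly requires the relevant set to be connected, i.e.\ $\R^2\setminus\overline\omega$ connected; the lemma as stated for a general smoothly bounded open $\omega$ would in fact fail for, say, an annular $\omega$, though this assumption is harmless for the holes $\omega_n$ used in the paper.
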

\begin{proof}
Without loss of generality, suppose $\om$ contains the origin. Write $i(z)=z/|z|^2$ for the inversion map on $\R^2\cup\{\infty\}$, so that $\om^*=i(\R^2\cup\{\infty\}\setminus\overline{\om})$ is also a smoothly bounded domain that contains the origin. Denote the fractional semi-norm by $[h]_{H^{1/2}(\partial \om)}=\left( \int_{\partial\om} \int_{\partial\om} \frac{|h(x)-h(y)|^2}{|x-y|^2} \, ds(x)ds(y) \right)^{\! 1/2}$ where $ds$ is the arclength element. Then for some constants $C_1,C_2,C_3>0$, we have 
\[
\int_{\om}|\nabla h|^2\leq C_1[h]_{H^{1/2}(\partial\om)}^2\leq C_2[h\circ i]_{H^{1/2}(\partial\om^*)}^2\leq C_3\int_{\om^*}|\nabla(h\circ i)|^2=C_3\int_{\R^2\setminus\om}|\nabla h|^2
\]
where the first ``inverse trace'' inequality comes from harmonicity of $h$, the second inequality uses the definition of the semi-norm, the third is the  $H^1\to H^{1/2}$ trace theorem applied to $u-\text{avg}_\Om(u)$ and followed by Poincar\'{e}'s inequality, and the final equality relies on invariance of the Dirichlet integral under the conformal map $i$. 
\end{proof}

\section*{Acknowledgements} 
Laugesen’s research was supported by grants from the Simons Foundation (964018) and the U.S. National Science Foundation (2246537).

\bibliographystyle{plain}

\begin{thebibliography}{99}

\bibitem{Ar95}
J. M. Arrieta, 
\emph{Rates of eigenvalues on a dumbbell domain. Simple eigenvalue case.} 
Trans. Amer. Math. Soc. 347 (1995), no.{\,}9, 3503--3531. 

\bibitem{AB95}
M. Ashbaugh and R. Benguria, 
\emph{Sharp upper bound to the first nonzero Neumann eigenvalue for bounded domains in spaces of constant curvature},
J. London Math. Soc. (2), 52(2) (1995), 402--416.

\bibitem{Ba72}
C. Bandle, 
\emph{Isoperimetric inequality for some eigenvalues of an inhomogeneous, free membrane}, 
SIAM J. Appl. Math., 22 (1972), 142--147.

\bibitem{BMN22}
D. Bucur, E. Martinet and M. Nahon, 
\emph{Sharp inequalities for Neumann eigenvalues on the sphere}, 
J. Differential Geom., to appear. \doi{arXiv:2208.11413} 


\bibitem{Fall}
M. M. Fall and T. Weth,  
\emph{Critical domains for the first nonzero Neumann eigenvalue in Riemannian manifolds}, 
J. Geom. Anal. 29 (2019), 3221--3247.

\bibitem{FLO23}
V. Felli, L. Liverani and R. Ognibene,
\emph{Quantitative spectral stability for the Neumann Laplacian in domains with small holes}, 
J. Funct. Anal. 288 (2025), Paper No.\ 110817.


\bibitem{FF}
F. Hecht, 
\emph{New development in freefem++}, 
J. Numer. Math. 20 (2012), 251--265.


\bibitem{HenrotPierre}
 Henrot, A., Pierre, M,
\emph{ Shape variation and optimization},
 2018.

\bibitem{Ka20}
 Y. Kabeya,
\emph{ Eigenvalues of the Laplace-Beltrami operator under the homogeneous Neumann condition on a large zonal domain in the unit sphere},
 Discrete Contin. Dyn. Syst. 40.6 (2020), pp. 3529--3559.


\bibitem{LL22}
J. J. Langford and R. S. Laugesen, 
\emph{Maximizers beyond the hemisphere for the second Neumann eigenvalue}, 
Math. Ann. 386 (2023), 2255--2281. 

\bibitem{La12} M. Lanza de Cristoforis,  
\emph{Simple Neumann eigenvalues for the Laplace operator in a domain with a small hole. A functional analytic approach.}
Rev. Mat. Complut. 25 (2012), 369--412.

\bibitem{M23}
E. Martinet, 
\emph{Numerical optimization of Neumann eigenvalues of domains in the sphere}, 
J. Comput. Phys. 508 (2024), Paper No. 113002, 26 pp.

\bibitem{MNP00}
V. Maz'ya, S. Nazarov and B. Plamenewskij,
Asymptotic Theory of Elliptic Boundary Value Problems in Singularly Perturbed Domains. Volumes 1 and 2, Oper. Theory Adv. Appl., 111 and 112,  Birkh\"{a}user Verlag, Basel, 2000.

\bibitem{NS08}
S. Nazarov and J. Sokolowski, 
\emph{Shape sensitivity analysis of eigenvalues revisited}, 
Control Cybernet. 37 (2008), 999--1012.

\bibitem{Oz83}
S. Ozawa, 
\emph{Spectra of domains with small spherical Neumann boundary}, 
J. Fac. Sci. Univ. Tokyo Sect. IA Math. 30 (1983), 259--277.

\bibitem{P51}
G. Pólya and G. Szeg\H{o},  
\emph{Isoperimetric Inequalities in Mathematical Physics}, 
No. 27. Princeton University Press, 1951.


\end{thebibliography}

\end{document}